\documentclass[12pt,reqno]{amsart}
\usepackage{amssymb}
\usepackage{amsmath, mathtools}

\usepackage{amsthm}

\usepackage{amscd}

\newcommand{\RNum}[1]{\uppercase\expandafter{\romannumeral #1\relax}}

\usepackage[T2A]{fontenc}
\usepackage[utf8]{inputenc}
\usepackage[english]{babel}

\input{int.def} 

\usepackage[sort]{cite}
\usepackage{tikz-cd}
\usetikzlibrary{cd}
\usepackage{dirtytalk}
\usepackage[linktoc=page, colorlinks, linkcolor=blue, citecolor=blue]{hyperref}

\usepackage{xcolor}
\usepackage{centernot}

\usepackage{enumitem}

\usepackage{pgfplots}
\usepackage{multicol}

\pgfplotsset{compat=1.17}

\makeatletter
\renewenvironment{proof}[1][\proofname]{%
  \par\vspace{\topsep}%
  \normalfont\topsep6\p@\@plus6\p@\relax
  \trivlist
  \item[\hskip\labelsep\itshape #1\@addpunct{.}]\ignorespaces
}{%
  \endtrivlist
}
\makeatother

\numberwithin{equation}{section}

\DeclarePairedDelimiterX \ip[2]{\langle}{\rangle}{#1,#2}
\DeclarePairedDelimiterXPP \Prob[1]{\mathbb{P}}\{\}{}{ #1} 
\DeclarePairedDelimiterXPP \Probevent[1]{\mathbb{P}}(){}{#1} 

\DeclareMathOperator{\E}{\mathbb{E}}

\newcommand{\one}{\mathbf{1}}

\def \R {\mathbb{R}}
\def \e {\varepsilon}

\usepackage[mathcal]{euscript}

\usepackage{titlesec}
\titleformat{\section}[runin]{\bfseries}{\thesection.}{3pt}{}[.]

\usepackage{geometry}
\newgeometry{vmargin={25mm}, hmargin={22mm,22mm}, footskip=10mm}   

\begin{document}

\title[Optimal discrepancy for triangles]{Optimal discrepancy for triangles}

\author{Gleb Smirnov}
\address{
Mathematical Sciences Institute, 
Australian National University, Canberra, Australia}
\email{gleb.smirnov@anu.edu.au}


\begin{abstract}
We prove that every $n$-point set in the plane has a two-coloring with discrepancy $O(n^{1/4})$ for all triangles. This improves the previous bound $O(n^{1/4}\sqrt{\log n})$ and matches the classical lower bound.
\end{abstract}

\maketitle

\setcounter{section}{0}


\section{Main results}\label{intro}
Let \(X=\{p_1,\ldots,p_n\}\subset\R^2\) be a set of points. We color each point \(+1\) or \(-1\). We want every triangle to contain nearly as many \(+1\)'s as \(-1\)'s. Encode a coloring by a vector \(\chi\in\{-1,1\}^n\): the point \(p_i\) gets color \(\chi_i\). For \(F\subset X\), write:
\[
\chi(F):=\sum_{p_i\in F}\chi_i.
\]
The \emph{combinatorial discrepancy} of \(\chi\) for triangles is \(\sup_T|\chi(X\cap T)|\), over all triangles \(T\). Our main result is the following.
\begin{theorem}\label{thm:triangles}
Every set \(X=\{p_1,\ldots,p_n\}\subset\R^2\) has a coloring \(\chi\in\{-1,1\}^n\) such that:
\[
|\chi(X\cap T)|\le C\,n^{1/4}\qquad\text{for all triangles }T,
\]
where \(C>0\) is an absolute constant.
\end{theorem}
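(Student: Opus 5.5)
The plan is to reduce the problem to halfplanes and then run a partial-coloring (entropy/Beck) argument that exploits the small primal shatter function of halfplanes. The $\sqrt{\log n}$ loss in the old bound has to be removed by a careful chaining (multi-scale) decomposition.

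\textbf{Step 1: reduction to halfplanes.} Every triangle is a signed combination of a bounded number of intersections of at most three halfplanes. More usefully, by a canonical decomposition, $\chi(X\cap T)$ can be written as a $\pm$ sum of $O(1)$ quantities of the form $\chi(X\cap W)$. Here each $W$ is a ``wedge-type'' range determined by two lines through a common vertex direction. Matoušek's approach gives a bound of $O(n^{1/4})$ for halfplanes directly via a chaining argument. For triangles the set system still has dual shatter function $O(m^2)$ and primal shatter function $O(m^3)$. So the target is a bound of the form $n^{1/2-1/(2d)}$ with $d=2$ (primal VC-type dimension governing packing). I will use the packing lemma: the ranges restricted to $X$, with the symmetric-difference metric, admit at most $O((n/\delta)^2)$ pairwise $\delta$-separated sets. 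That is, the Haussler packing bound with packing exponent $2$ for triangles; this follows because triangles on $n$ points are determined, up to small symmetric difference, by $O(1)$ points of a cutting.

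\textbf{Step 2: chaining with entropy partial colorings.} For each scale $\delta_k=n/2^k$, I take a maximal $\delta_k$-separated family $\mathcal N_k$ of triangle-ranges, of size $O(4^k)$. Every range $R$ then telescopes as $R=R_0+\sum_k (R_{k+1}\setminus R_k)-(R_k\setminus R_{k+1})$, where each difference set has size at most $\delta_k$. The number of distinct difference sets at level $k$ is $O(16^k)$. I then apply the entropy method (Beck--Spencer partial coloring lemma, in the form of Lovett--Meka or Bansal) to obtain a partial coloring of a constant fraction of points. The level-$k$ difference sets receive a discrepancy budget $\Delta_k=c\sqrt{\delta_k}\,\cdot \alpha_k$, with weights $\alpha_k$ chosen so that the entropy constraint $\sum_k |\mathcal N_k|^2 g(\Delta_k/\sqrt{\delta_k})\le n/10$ holds, where $g(\lambda)\approx e^{-\lambda^2/9}$ for large $\lambda$ and $\log(1/\lambda)$ for small $\lambda$. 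The key point is to take $\alpha_k$ growing geometrically away from the critical level $k^*$ where $16^{k^*}\approx n$, i.e.\ $\delta_{k^*}\approx n^{3/4}$. At that level $\sqrt{\delta}\cdot$ gives nothing directly; the gain instead comes from letting sets with $\delta_k\lesssim n^{1/2}$ be handled trivially, and from requiring zero discrepancy only for $O(n)$ coarse levels. Then $\sum_k\Delta_k$ is a geometric series dominated by its value at $k^*$. There $\sqrt{\delta}$ combined with the budget gives $O(n^{1/4})$, which I would verify with the standard computation $\sqrt{n/2^{k^*}}\cdot$ const and $2^{k^*}\approx n^{1/2}$. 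This gives a partial coloring with discrepancy $O(n^{1/4})$ on all triangles, with no log factor because the budgets decay geometrically on both sides of $k^*$.

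\textbf{Step 3: iteration.} I repeat on the uncolored points, of which there are at most $n/2$ after each round. The per-round bounds $C(n/2^j)^{1/4}$ sum geometrically to $O(n^{1/4})$.

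\textbf{Main obstacle.} The hard part is Step 2: making the entropy constraint converge with $\Delta_k$ summing geometrically. Here $|\mathcal N_k|$ grows like $16^k$, so naive budgets give $\sum_k$ over $\log n$ levels, which reproduces the $\sqrt{\log n}$. I would handle the fine levels ($k>k^*$) with Lovett--Meka's Gaussian-tail constraint, where the excess $\Delta_k/\sqrt{\delta_k}$ grows like $\sqrt{k-k^*}$ but $\sqrt{\delta_k}$ decays like $2^{-(k-k^*)/2}$, so the sum still converges. At coarse levels the count of sets is small, so the $\log(1/\lambda)$ entropy term is summable. I expect this two-sided balancing to be exactly where the improvement over the earlier $O(n^{1/4}\sqrt{\log n})$ bound is won.
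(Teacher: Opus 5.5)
\textbf{Step~2 rests on a packing bound that is false for triangles.} Triangle traces do not satisfy $|\mathcal F|\le C(n/\delta)^2$ for $\delta$-separated families. To see this, put the points in convex position and cut the cyclic order into runs of $2\delta$ consecutive points. Take unions of runs forming three arcs whose three gaps lie in small angular windows around $0$, $2\pi/3$ and $4\pi/3$. Each such set is a triangle trace: use one chord per gap, and the outward normals positively span $\R^2$. Distinct sets of this kind differ in at least one run, and there are $\Omega((n/\delta)^6)$ of them.

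Your cutting heuristic gives the same count. A triangle is fixed up to error $\delta$ by three lines, each chosen from $O(r^2)$ types with $r=n/\delta$, which gives $O(r^6)$ rather than $O(r^2)$. Likewise, the primal shatter function of triangles is of order $m^6$, not $O(m^3)$.

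With the correct exponent, Matou\v{s}ek-style primal chaining gives only $n^{1/2-1/(2d)}$ with $d=6$. Reducing to wedges of two halfplanes (exponent $4$, by the same construction with two arcs) gives $n^{3/8}$. The $d=2$ you quote for triangles is the \emph{dual} shatter exponent, and that route (Matou\v{s}ek--Welzl--Wernisch) is exactly the one carrying the $\sqrt{\log n}$. The two-sided balancing of budgets around $k^*$, which you identify as the source of the gain, is just Matou\v{s}ek's tight argument for halfplanes; it says nothing about triangles. The bookkeeping is also off. With parent links there are $O(4^k)$ increments at level $k$, not $16^k$, and the critical level satisfies $4^{k^*}\approx n$, so $\delta_{k^*}\approx\sqrt n$. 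Your choice $16^{k^*}\approx n$ would give $\sqrt{\delta_{k^*}}\approx n^{3/8}$.

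\textbf{What is missing is a lossless passage from halfplanes to triangles.} The paper orders $X$ by the first coordinate. It then writes every slightly perturbed triangle trace as a signed combination of $O(1)$ corners $X\cap\Gamma(\ell,c)$, each a halfplane trace intersected with a prefix $X_k$ (\S\ref{sec:triangles}). What is needed is therefore Theorem~\ref{thm:prefix1}: one coloring with $|\chi(F\cap X_k)|=O(n^{1/4})$ for all halfplane traces $F$ and all $k$ at once.

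The paper chains only over halfplane traces, where the packing exponent is $2$. This yields $\psi_D$ supported in $K_D(X)$ with $\mathcal E(\psi_D)=O(n^{3/2}/D^2)$ (Propositions~\ref{slabs} and~\ref{prop:halfplanes}). The prefix parameter costs nothing, because the online balancing of Theorem~\ref{thm:SV} keeps every partial sum in $2K_D(X)$.

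A plain entropy partial coloring has no such prefix control. Treating corners as a set system with $O(m^3)$ traces gives only $n^{1/3}$. To stay in the entropy framework you would need a separate device for the order parameter. One possibility is to split each prefix into dyadic blocks and impose halfplane constraints on every block of size $m$. The budget would be about $m^{1/4}$ times a slowly growing factor, chosen so the costs over the $\log n$ block levels still sum to $O(n)$. Some step of this kind is indispensable, and your proposal contains none.
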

For triangles, the best bound we are aware of is \(O(n^{1/4}\sqrt{\log n})\). It follows from more general results of Matou\v{s}ek et al.\ \cite{MWW}. Chazelle \cite[\S\,1.4]{Chaz} gives a simple exposition.
\smallskip%

Theorem~\ref{thm:triangles} is sharp. For a finite set, every halfplane cuts out the same points as some large triangle. So it suffices to prove the lower bound for halfplanes. For every \(n\), there is an \(n\)-point set on which every coloring has discrepancy \(\Omega(n^{1/4})\) for halfplanes. This is due to Alexander \cite{Alexander}. Chazelle, Matou\v{s}ek, and Sharir \cite{CMS} give a simpler proof.
\smallskip%

We deduce Theorem~\ref{thm:triangles} from a statement about halfplanes. A \emph{halfplane trace} is a set \(X\cap P\), where \(P\subset\R^2\) is a halfplane. Let \(\mathcal H(X)\) be the family of halfplane traces. Now let the points of \(X\) be ordered. For \(0\le k\le n\), set:
\[
X_k:=\{p_1,\ldots,p_k\}.
\]
\begin{theorem}\label{thm:prefix1}
Every ordered \(X=\{p_1,\ldots,p_n\}\subset\R^2\) has a coloring \(\chi\in\{-1,1\}^n\) such that:
\[
|\chi(F\cap X_k)|\le C\,n^{1/4}\qquad\forall F\in\mathcal H(X),\ \ 0\le k\le n,
\]
where \(C>0\) is an absolute constant.
\end{theorem}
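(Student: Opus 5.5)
The plan is to use the prefix structure to reduce to a hierarchical family of \emph{local} halfplane traces, and then run the partial coloring (entropy) method with chaining, in the style of Matou\v{s}ek's $O(n^{1/4})$ bound for halfplanes. A direct use of the shatter function of $\{F\cap X_k\}$ is too weak: it is $O(m^3)$, which only gives $O(n^{1/3})$.

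\emph{Step 1 (dyadic reduction).} Assume $n=2^L$ and consider the binary tree of dyadic index intervals $I=\{a2^j+1,\ldots,(a+1)2^j\}$. Each prefix $X_k$ is a disjoint union of point blocks $B_I=\{p_i:i\in I\}$, with at most one block of each size $2^j$. So $F\cap X_k$ is a disjoint union of sets $F\cap B_I$, one for each of at most $L+1$ distinct sizes. It therefore suffices to find $\chi$ with
\[
|\chi(F\cap B_I)|\le C\,|B_I|^{1/4}\Bigl(\tfrac{n}{|B_I|}\Bigr)^{\varepsilon}\qquad\text{for all dyadic }I\text{ and all }F\in\mathcal H(X),
\]
for a fixed small $\varepsilon<1/4$, say $\varepsilon=1/8$. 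Summing over sizes $m=2^j$ gives $\sum_j 2^{j/4}(2^{L-j})^{\varepsilon}=O(n^{1/4})$, since the terms grow geometrically in $j$. The slack factor $(n/m)^{\varepsilon}$ is exactly what will pay for having $\log n$ levels of blocks.

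\emph{Step 2 (partial coloring with chaining).} I would prove a partial coloring lemma. For any $Y\subset X$ of size $s$, with the induced blocks $B_I\cap Y$, there is a partial coloring $\chi:Y\to\{-1,0,1\}$ with at least $s/10$ nonzero entries and
\[
|\chi(F\cap B_I\cap Y)|\le C'\,m_I^{1/4}\Bigl(\tfrac{s}{m_I}\Bigr)^{\varepsilon},\qquad m_I=|B_I\cap Y|.
\]
Within a single block of size $m$, I use the standard chaining for halfplanes. By Haussler's packing lemma (halfplanes have primal shatter exponent $2$), for each $t$ there are $O(4^{t})$ ``canonical'' traces at level $t$. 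Every trace in the block is a telescoping sum of differences $D$, one per level, with $|D|\le m2^{-t}$, and level $t$ contributes $O(4^t)$ difference sets. To a difference set at level $t$ in a block of size $m$ I assign the allowance $\Delta_{m,t}=c\,m^{1/4}(s/m)^{\varepsilon}\,\gamma_t$, where the weights $\gamma_t$ sum to $O(1)$ and are concentrated near the critical level $4^t\approx\sqrt m$. The entropy (Beck--Spencer / Gluskin) condition then reads
\[
\sum_{m}\frac{s}{m}\sum_t 4^t\,h\!\Bigl(\frac{\Delta_{m,t}}{\sqrt{m2^{-t}}}\Bigr)\le \frac{s}{5},
\]
with $h(\lambda)\approx e^{-\lambda^2/9}$ for $\lambda\ge1$ and $h(\lambda)\approx\log(2/\lambda)$ for $\lambda<1$. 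For a single block, Matou\v{s}ek's computation shows the inner sum is at most $c\,m$ when the allowance is $m^{1/4}$. The extra factor $(s/m)^{\varepsilon}$ makes the inner sum at most $c\,m\,(m/s)^{\eta}$ for some $\eta>0$. Indeed, in the large-$\lambda$ regime $h$ decays super-exponentially, and in the small-$\lambda$ regime $\log(2/\lambda)$ drops by $\varepsilon\log(s/m)$, which is absorbed after a small shift of the $\gamma_t$. The outer sum over $m=s,s/2,\ldots$ is then a geometric series, bounded by $c\,s$.

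\emph{Step 3 (iteration).} I would apply the lemma to the uncolored set $Y_i$, with $|Y_i|\le(9/10)^i n$, for $O(\log n)$ rounds, and finish the last $O(1)$ points arbitrarily. In round $i$ the allowance for a block of size $m\le s_i$ is $m^{1/4}(s_i/m)^{\varepsilon}\le m^{1/4}(n/m)^{\varepsilon}(9/10)^{i\varepsilon}$. This decays geometrically in $i$, so summing over rounds preserves the form of the bound in Step 1. Blocks are defined by the original order, so $B_I\cap Y_i$ is again a block of the induced structure.

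The main obstacle is Step 2: making the entropy sum close, uniformly over all block sizes, at the finest chaining levels, where $4^t$ is large and the sets are tiny. Matou\v{s}ek's single-block bound is already tight there. My first attempt would be to stop the chaining at size $\approx m^{1/2}$ per block and control the remaining tiny differences deterministically: their number per trace is bounded, and each has size $O(\sqrt m)$, which is still too large. If that fails, I would switch to a Lovett--Meka / Bansal-type Gaussian random walk. There, constraints only cost $\sum\exp(-\lambda^2/16)\lesssim s$ with no $\log(1/\lambda)$ term, and the slack $(s/m)^{\varepsilon}$ is enough to make the constraint count summable across the $\log n$ block levels.
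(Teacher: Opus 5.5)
Your route differs from the paper's, and it works once one bookkeeping error is fixed. You have also misplaced the difficulty.

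\textbf{The error is in Step 3.} The count in Step 2 (``$s/m$ blocks of size $m$, summed over $m=s,s/2,\ldots$'') holds for the dyadic blocks of $Y$ in its \emph{own} induced order. It does not hold for the restricted original blocks $B_I\cap Y$. After restriction, each of the $\log_2(n/s)$ original levels with $2^j\ge s$ can contain a block of $Y$-size comparable to $s$, and these blocks can be pairwise distinct. For example, let $Y$ consist of $p_1,\ldots,p_{s/2}$ together with the points $p_{2^j+1}$, $s\le 2^j<n$. For such blocks the slack $(s/m_I)^{\varepsilon}$ is $O(1)$, so each costs a fixed fraction of $s$. The entropy sum is then only $O(s\log(n/s))$, which in round $i$ is of order $i\,s_i$ and eventually exceeds $s_i/5$ for any fixed constants.

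The slip is the sentence ``$B_I\cap Y_i$ is again a block of the induced structure'': $B_I\cap Y_i$ is an interval of $Y_i$, not a dyadic block of it. The fix is easy and makes the global block structure of Step 1 unnecessary. In round $i$, use the dyadic blocks of $Y_i$ in its own order. Since $X_k\cap Y_i$ is a prefix of $Y_i$, Step 2 gives $|\chi_i(F\cap X_k\cap Y_i)|\le\sum_{m\le s_i}C m^{1/4}(s_i/m)^{\varepsilon}=O(s_i^{1/4})$. Summing, $\sum_i s_i^{1/4}=O(n^{1/4})$.

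\textbf{The obstacle you flag is not one.} Take $\gamma_t=2^{-|t-t_*|/8}$ with $4^{t_*}\approx m$, and write $R=(s/m)^{\varepsilon}$. At finer levels $\lambda_t\gtrsim R\,2^{3(t-t_*)/8}$, so $\sum_t 4^t e^{-\lambda_t^2/9}$ converges super-exponentially. Matoušek's single-block count is tight only at the crossover $\lambda_t\approx 1$. The slack moves that crossover down to $4^t\approx m R^{-a}$ for a fixed $a>0$ ($a=16/5$ for this $\gamma_t$). The coarser levels contain $O(mR^{-a})$ sets in total, each costing $O(\log(1/\lambda_t))$, which grows only linearly in the level gap. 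So a block costs $O(mR^{-a})$. This, not an additive drop in $\log(2/\lambda)$, is the real source of your factor $(m/s)^{\eta}$. Step 2 therefore closes with the standard entropy partial-coloring lemma. Neither truncating the chaining (which, as you note, fails) nor a Lovett--Meka walk is needed.

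\textbf{Comparison with the paper.} The paper never decomposes prefixes. It uses the same chaining and packing lemma, but combines them with Proposition~\ref{slabs} to build a smooth $\psi_D$ supported in the plain halfplane body $K_D(X)$ with $\mathcal E(\psi_D)=O(n^{3/2}/D^2)$. It then feeds $\psi_D$ to the online algorithm of Theorem~\ref{thm:SV}. Every partial sum of accepted terms stays in $2K_D(X)$, so all prefixes are controlled simultaneously, with no dyadic blocks and no slack. Cauchy--Schwarz bounds the discards by $\sqrt n\,\mathcal E(\psi_D)^{1/2}$.

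Your argument stays within classical tools (entropy lemma plus packing lemma) and pays for prefixes with the $(s/m)^{\varepsilon}$ slack spread over $\log n$ scales. The paper's gets prefix control for free from the online nature of Theorem~\ref{thm:SV}. Its new ingredient is the Fisher-energy bound for intersections of slabs, used in place of an entropy condition.
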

For \(k=n\), this is Matou\v{s}ek's bound for halfplanes \cite{Mat-halfspaces}. The novelty is that one coloring controls all prefixes at once. We explain the step from halfplanes to triangles in \S\,\ref{sec:triangles}.
\smallskip%

We prove Theorem~\ref{thm:prefix1} by the Fisher information technique of \cite{Versh-Sm-3, Versh-Sm-4}. Consider the body of all \(z\in\R^n\) with \(|\sum_{p_i\in F}z_i|\le D\) for every \(F\in\mathcal H(X)\), where \(D\) is of order \(n^{1/4}\). This body is an intersection of slabs. The crucial step is to bound the Fisher information of densities supported on such intersections. We derive these bounds in \S\,\ref{sec:slabs}. They may be of independent interest. The proof is completed in \S\,\ref{chaining} and \S\,\ref{sec:prefix}.
\smallskip%

We now discuss related work.
\smallskip%

Theorem~\ref{thm:prefix1} is a vector balancing statement. For each \(p_i\), let \(u_i\in\{0,1\}^{\mathcal H(X)}\) indicate the halfplane traces that contain \(p_i\). Then Theorem~\ref{thm:prefix1} says that all prefix sums \(\sum_{i\le k}\chi_iu_i\) are small in \(\ell_\infty\).
\smallskip%

Vector balancing has seen rapid progress. For the Koml\'os problem, Bansal and Jiang \cite{BansalJiang} improved Banaszczyk's \(O(\log^{1/2}n)\) bound to \(O(\log^{1/4}n)\). Guo, Fang, and Lu \cite{GFL} obtained a constant bound. This settles the Koml\'os conjecture. For alternative proofs, see Karingula and Lovett \cite{KarLov} and Akbas and Sra \cite{AkbasSra-1, AkbasSra-2}. In the online version of the problem, the vectors are fixed in advance and revealed one by one. Each must be signed on arrival, so that all prefix sums stay small. Kulkarni, Reis, and Rothvoss \cite{Kulk-Reis-Roth} and Aden-Ali \cite{AdenAli} obtained optimal bounds. These improve earlier bounds of Alweiss, Liu, and Sawhney \cite{AL-Saw} and Bansal et al.\ \cite{Bans-JMSS}. Finally, Altschuler and Tikhomirov \cite{AltTikh-1} solved the online Spencer problem, where the vectors are only bounded in \(\ell_\infty\).
\smallskip%

Our proof uses the partial coloring method. It goes back to Beck \cite{Beck-3} and Spencer \cite{Spencer-6}. Constructive versions are due to Bansal \cite{Bansal-10} and to Lovett and Meka \cite{LovMeka}. These methods need the convex body to be large, in entropy or in Gaussian measure. Our mechanism is different. We need instead a density supported in the body with small Fisher information.
\smallskip%

For combinatorial and Lebesgue discrepancy of geometric set systems, see the monographs \cite{Mat, Chaz, Beck-Chen}. Like Matou\v{s}ek's proof for halfplanes \cite{Mat-halfspaces}, ours uses chaining and the packing lemma for halfplanes. Mustafa~\cite{Mustafa} gives a simple proof of that lemma. For axis-parallel boxes, see Matou\v{s}ek et al.\ \cite{MNT} and Nikolov \cite{Nikolov}. For the Lebesgue discrepancy of convex polygons, see Chen and Travaglini~\cite{ChenTrav}. For rectangles with restricted rotations, see Bilyk and Mastrianni~\cite{BilykMast}.
\smallskip%

Beck's transference principle \cite{Beck-1} turns upper bounds on combinatorial discrepancy into upper bounds on Lebesgue discrepancy. By Theorem~\ref{thm:triangles}, for every \(n\), some \(n\)-point set in \([0,1]^2\) has Lebesgue discrepancy \(O(n^{1/4})\) for rotated rectangles. This improves Beck's bound \(O(n^{1/4}\sqrt{\log n})\) \cite{Beck-Chen} and matches his 
lower bound \cite{Beck-2}.
\smallskip%

\textbf{Acknowledgment and AI assistance.} 
The author thanks OpenAI's ChatGPT for useful insights. ChatGPT designed a proof of 
Proposition \ref{slabs}, although this is not the proof used in the present paper. ChatGPT carried out the estimates in the proof of 
Proposition \ref{prop:halfplanes}. Finally, ChatGPT helped edit the manuscript.

\section{Fisher cost of intersecting slabs}\label{sec:slabs}
 
Let \(K\subset\R^n\) be a convex body. We seek a function \(\psi\in C_c^\infty(\R^n)\) with
\[
\int_{\R^n}\psi^2=1,\qquad
\operatorname{supp}\psi\subset K,
\]
and with small energy
\[
\mathcal E(\psi):=\int_{\R^n}\|\nabla\psi\|_2^2.
\]
The energy controls the number of discarded terms in Theorem~\ref{thm:SV}. In \cite{Versh-Sm-3}, we denoted it by \(\operatorname{tr}M_\psi\). The bodies \(K\) we need are intersections of slabs. Here, we bound \(\mathcal E(\psi)\) for such intersections.
\begin{proposition}\label{slabs}
Let \(v_1,\ldots,v_M\in\R^n\) be unit vectors, and let \(r_1,\ldots,r_M>0\). Assume that the body
\[
K:=\bigl\{z\in\R^n:\ |\langle v_j,z\rangle|\le r_j\quad \forall j\bigr\}
\]
is bounded. Then there is \(\psi\in C_c^\infty(\R^n)\) such that:
\[
\int_{\R^n} \psi^2 = 1,\qquad
\operatorname{supp}\psi\subset K,
\]
and such that:
\begin{equation}\label{eq:slab-fisher}
\mathcal E(\psi)\le
\frac{n}{4} + C 
\sum_{j=1}^M\frac{1}{r_j^2}\, 
e^{-r_j^2/32},
\end{equation}
where \(C>0\) is an absolute constant.
\end{proposition}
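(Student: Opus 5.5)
The plan is to take \(\psi\) to be a square root of a truncated Gaussian. The term \(n/4\) is exactly the Fisher energy of \(\psi_0=\gamma^{1/2}\), where \(\gamma\) is the standard Gaussian density on \(\R^n\). Indeed \(\nabla\gamma^{1/2}=-\tfrac{z}{2}\gamma^{1/2}\), so
\[
\mathcal E(\gamma^{1/2})=\tfrac14\int\|z\|^2\gamma=\tfrac n4.
\]
So the natural candidate is
\[
\psi=c\,\gamma^{1/2}\prod_{j=1}^M\phi_j\bigl(\langle v_j,z\rangle\bigr),
\]
where \(\phi_j:\R\to[0,1]\) is a smooth cutoff in one variable. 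Each \(\phi_j\) equals \(1\) on \([-r_j/2,r_j/2]\), vanishes outside \((-r_j,r_j)\), and has \(|\phi_j'|\lesssim 1/r_j\). (If one wants \(\operatorname{supp}\psi\) strictly compact, note that \(K\) is bounded and the support lies in \(K\).) Smoothness is clear, and \(\operatorname{supp}\psi\subset K\). The constant \(c\) normalizes \(\int\psi^2=1\).

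Write \(\Phi=\prod_j\phi_j(\langle v_j,\cdot\rangle)\) and \(\psi=c\gamma^{1/2}\Phi\). Expanding the gradient gives
\[
\|\nabla\psi\|^2=c^2\Bigl(\tfrac{\|z\|^2}{4}\gamma\Phi^2-\tfrac12\langle z,\nabla(\Phi^2)\rangle\gamma+\gamma\|\nabla\Phi\|^2\Bigr).
\]
Integrating by parts in the middle term, using \(\nabla\gamma=-z\gamma\), turns \(-\tfrac12\int\langle z,\nabla\Phi^2\rangle\gamma\) into \(\tfrac12\int\Phi^2(n-\|z\|^2)\gamma\). Combined with the first term, this gives the standard identity
\[
\mathcal E(\gamma^{1/2}f)=\int\|\nabla f\|^2\gamma+\int f^2\Bigl(\tfrac n2-\tfrac{\|z\|^2}{4}\Bigr)\gamma .
\]
The second integral is awkward, because \(\Phi^2\) cuts off large \(|z|\) in some directions and so lowers the average of \(\|z\|^2\). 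To avoid this, I would instead use the ground-state (Ornstein--Uhlenbeck) form. Writing \(\psi=\gamma^{1/2}f\) with \(\int f^2\gamma=1\), the operator \(-\Delta+\|z\|^2/4\) has ground energy \(n/2\) and ground state \(\gamma^{1/2}\). This gives
\[
\mathcal E(\psi)=\int\|\nabla f\|^2\gamma+\tfrac n2-\tfrac14\int\|z\|^2f^2\gamma .
\]
It would be cleaner to compare \(\int\|z\|^2 f^2\gamma\) with \(n\) directly. However, truncation can only lower the second moment in the \(v_j\) directions, so the deviation is controlled by \(\sum_j\int\langle v_j,z\rangle^2\one_{|\langle v_j,z\rangle|>r_j/2}\gamma\). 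This is Gaussian-tail small, of order \(r_j e^{-r_j^2/8}\), after normalization. The issue is that the directions \(v_j\) are not orthogonal, so \(\|z\|^2\) does not split over the \(j\). I would handle this by Cauchy--Schwarz against the one-dimensional marginals, so that each \(j\) contributes only through its own marginal.

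The main term is \(\int\|\nabla\Phi\|^2\gamma\). Here \(\nabla\Phi=\sum_j \phi_j'\,v_j\prod_{i\ne j}\phi_i\). Each \(\phi_j'\) is supported where \(|\langle v_j,z\rangle|\ge r_j/2\), and this set has Gaussian mass \(\lesssim e^{-r_j^2/8}\). The difficulty is the cross terms \(\phi_j'\phi_k'\langle v_j,v_k\rangle\) for \(j\neq k\), which could accumulate over \(M^2\) pairs. I expect this to be the main obstacle. What I would try first is to avoid cross terms entirely with a different ansatz: take \(f\) to be a product of exponentials, or use a log-concave density \(\rho\propto\gamma\cdot\one_K\) smoothed. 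For a log-concave density the Fisher information can be computed by a boundary formula, and each facet contributes separately. Alternatively, I would apply \(\|\sum a_j\|^2\le(\sum \|a_j\|)^2\) and pay a loss. This is why the exponent is weakened to \(e^{-r_j^2/32}\). The weakening suggests slack: I would choose cutoffs transitioning at \(r_j/2\) and use the inequality
\[
\Bigl\|\sum_j a_j\Bigr\|^2\le\sum_j w_j^{-1}\|a_j\|^2\quad\text{with}\quad\sum_j w_j\le1,\qquad w_j\propto e^{-r_j^2/32}.
\]
This leaves per-term costs of the form \(r_j^{-2}e^{-r_j^2/8}/w_j\). That gives the right shape only if \(\sum_j w_j\) is bounded, which is not automatic.

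If this fails, the fallback is the one-dimensional reduction. Do the slabs one at a time, replacing the Gaussian by a sequence of conditioned measures. By Prékopa--Leindler and Brascamp--Lieb, each stays at least as log-concave as \(\gamma\). Then cutting a new slab costs in Fisher information only what the one-dimensional marginal loses, namely \(\lesssim r_j^{-2}e^{-r_j^2/32}\) after a smooth tapering of width \(\asymp r_j\). Summing over \(j\) gives \eqref{eq:slab-fisher}.
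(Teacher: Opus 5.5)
Your identification of the $n/4$ term with $\mathcal E(\gamma^{1/2})$ matches the paper. Beyond that, the proposal is a list of strategies, none carried through, and each one stalls at the obstacle you yourself flag as the main one.

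\textbf{Where the centered cutoff breaks.} With a single centered cutoff $\Phi=\prod_j\phi_j(\langle v_j,\cdot\rangle)$ you must control three things at once: the cross terms $\phi_j'\phi_k'\langle v_j,v_k\rangle$, the normalization $\int\Phi^2\gamma$, and the drop in $\int\|z\|^2f^2\gamma$. The statement allows many $r_j$ to be $O(1)$ or smaller. In the worst case this is what happens in Proposition~\ref{prop:halfplanes}, with up to order $n$ such slabs at the levels $i\le i_*$. Then $\int\Phi^2\gamma$ can be exponentially small in $n$, so bounds on the \emph{Gaussian} mass of $\{|\langle v_j,z\rangle|\ge r_j/2\}$ say nothing about the normalized density. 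You would need an extra tool, such as the Gaussian correlation inequality with even log-concave cutoffs, and you do not supply one.

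\textbf{Why the proposed remedies fail.}
Your weighted inequality needs $\sum_j w_j\le 1$ with $w_j\propto e^{-r_j^2/32}$, which is false in that regime. The crude bound $(\sum_j\|a_j\|)^2$ loses the number of simultaneously active slabs, which is of order $n$ there; this gives $O(n^2)$ instead of $O(n)$. The log-concave ``boundary formula'' is for $\gamma\one_K$, whose energy is infinite, so it sends you back to smoothing. The sequential fallback rests on the unproved claim that cutting one more slab costs only what the one-dimensional marginal loses. After renormalizing, the new energy contains the average of the current energy density $\|\nabla\sqrt\rho\|^2$ over the kept region. That density already contains the gradients of earlier cutoffs, which may sit inside the new slab. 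So in general you only get the old energy divided by the kept mass. Iterating this gives a multiplicative loss, not the additive bound \eqref{eq:slab-fisher}.

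\textbf{The missing idea.} The paper uses IMS localization with a quadratic partition of unity in each direction instead of one cutoff. This removes all three difficulties at once.
Take $Q_{j,k}(z)=\eta_k(\langle v_j,z\rangle/r_j)$ with $\sum_k\eta_k^2=1$ and $\operatorname{supp}\eta_k\subset[\frac{3k}{4}-\frac12,\frac{3k}{4}+\frac12]$, and set $P_\alpha=\prod_jQ_{j,\alpha_j}$.
Since $\sum_k\eta_k\dot\eta_k=0$, Lemma~\ref{lem:product} gives $\sum_\alpha\|\nabla P_\alpha\|^2=\sum_jr_j^{-2}D(t_j)$ with no cross terms.
For $\phi=\gamma^{1/2}$ this yields $\sum_\alpha\mathcal E(\phi P_\alpha)=\frac n4+\sum_jr_j^{-2}\E D(t_j(Z))$, while $\sum_\alpha\int(\phi P_\alpha)^2=1$ exactly, so there is no normalization loss.
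By pigeonhole, some normalized piece $f_\alpha$ has energy at most this quantity. It is supported in $L_\alpha$, which may be far from the origin. But $L_\alpha-L_\alpha\subset K$, so a translate of $f_\alpha$ is supported in $K$. This translation step is what makes narrow slabs harmless: $\frac12K$ never needs to carry much Gaussian mass.
Finally, $D$ vanishes on $(-\frac14,\frac14)$, so slab $j$ costs at most $C_0r_j^{-2}\mathbb P(|\langle v_j,Z\rangle|\ge r_j/4)\le 2C_0r_j^{-2}e^{-r_j^2/32}$. The constant $32$ comes from this threshold $r_j/4$, not from a Cauchy--Schwarz loss.
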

The rest of this section proves Proposition~\ref{slabs}. The proof uses quadratic partitions of unity and the IMS localization formula. See Simon \cite[\S\,3]{Simon-IMS} for background and applications. To keep the exposition self-contained, here we give proofs of 
everything we use.
\smallskip%
 
\emph{Quadratic partitions.} Cover \(\R\) by the intervals:
\[
I_k:=\Bigl[\frac{3k}{4}-\frac12,\ \frac{3k}{4}+\frac12\Bigr],\qquad k\in\mathbb Z.
\]
Each \(I_k\) meets \(I_{k-1}\) and \(I_{k+1}\), and no other interval. The middle part \(\bigl(\frac{3k}{4}-\frac14,\ \frac{3k}{4}+\frac14\bigr)\) of \(I_k\) meets no other interval. Take a quadratic partition of unity subordinate to this cover. That is, take smooth functions \(\eta_k\) such that:
\[
\operatorname{supp}\eta_k\subset I_k,\qquad \sum_{k\in\mathbb Z}\eta_k^2=1.
\]
We take it translation invariant:
\[
\eta_k(t)=\eta_0(t-3k/4).
\]
To build such a partition, fix \(\beta_0\in C^\infty(\R)\) with \(\beta_0>0\) on \((-1/2,1/2)\) and \(\beta_0=0\) outside. Then set:
\[
\beta_k(t):=\beta_0(t-3k/4),\qquad
\eta_k(t):=\frac{\beta_k(t)}{\sqrt{\sum_\ell\beta_\ell(t)^2}}.
\]
More generally, a quadratic partition of 
unity in \(\R^n\) is a locally finite family \(A_a\) of smooth functions with 
\[
\sum_aA_a^2=1.
\]
The product of two such partitions is again one. Moreover, their gradient energies add up.
\begin{lemma}\label{lem:product}
Let \(A_a\) and \(B_b\) be quadratic partitions of unity on \(\R^n\). Then \(A_a B_b\) is also 
a quadratic partition of unity, and:
\[
\sum_{a,b}\|\nabla(A_aB_b)\|_2^2=\sum_a\|\nabla A_a\|_2^2+\sum_b\|\nabla B_b\|_2^2.
\]
\end{lemma}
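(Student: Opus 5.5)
The plan is a direct pointwise computation. We show that \(\{A_aB_b\}\) is a quadratic partition of unity, and that the identity holds pointwise, before any integration. The sum \(\sum_a\|\nabla A_a\|_2^2\) is read as the pointwise sum \(\sum_a|\nabla A_a(x)|^2\), which is a function of \(x\). The norms in the statement are then the corresponding pointwise (or integrated) quantities; integrating the pointwise identity gives the integrated version.

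First, the partition property. Each \(A_aB_b\) is smooth. The family is locally finite, because near any point only finitely many \(A_a\) and finitely many \(B_b\) are nonzero. Moreover
\[
\sum_{a,b}A_a^2B_b^2=\Bigl(\sum_aA_a^2\Bigr)\Bigl(\sum_bB_b^2\Bigr)=1 .
\]

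Second, the gradient identity. By the product rule, \(\nabla(A_aB_b)=B_b\nabla A_a+A_a\nabla B_b\). Squaring and summing over \(a,b\) gives three terms:
\[
\sum_{a,b}B_b^2|\nabla A_a|^2+\sum_{a,b}A_a^2|\nabla B_b|^2+2\sum_{a,b}A_aB_b\langle\nabla A_a,\nabla B_b\rangle .
\]
Using \(\sum_bB_b^2=1\) and \(\sum_aA_a^2=1\), the first two terms reduce to \(\sum_a|\nabla A_a|^2\) and \(\sum_b|\nabla B_b|^2\). The cross term factors as
\[
2\Bigl\langle\sum_aA_a\nabla A_a,\ \sum_bB_b\nabla B_b\Bigr\rangle .
\]
Here \(\sum_aA_a\nabla A_a=\tfrac12\nabla\sum_aA_a^2=\tfrac12\nabla 1=0\). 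So the cross term vanishes.

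The only delicate point is interchanging sums with differentiation. Local finiteness makes every sum finite near each point, so term-by-term differentiation is legitimate. Nothing here is a real obstacle; the key observation is the vanishing of \(\sum_aA_a\nabla A_a\).
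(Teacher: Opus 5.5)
Your proof is correct and follows essentially the paper's argument. You do the same pointwise product-rule expansion: the diagonal terms collapse via \(\sum_a A_a^2=\sum_b B_b^2=1\), and the cross term vanishes because \(\nabla\sum_a A_a^2=0\). The paper writes that cross term as \(\tfrac12\langle\nabla A_a^2,\nabla B_b^2\rangle\), while you factor it as \(2\langle\sum_a A_a\nabla A_a,\sum_b B_b\nabla B_b\rangle\); the two are equivalent. You also note that local finiteness justifies differentiating the sums term by term, which the paper leaves implicit.
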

\begin{proof}
Using
\begin{multline*}
\|\nabla(A_aB_b)\|_2^2 = B_b^2\|\nabla A_a\|_2^2 + A_a^2\|\nabla B_b\|_2^2 + 2A_aB_b\langle\nabla A_a,\nabla B_b
\rangle \\  
= B_b^2\|\nabla A_a\|_2^2 + A_a^2\|\nabla B_b\|_2^2 + \frac12 \langle\nabla A_a^2,\nabla B_b^2
\rangle,
\end{multline*}
and 
\[
\sum_a A_a^2 = \sum_b B_b^2 = 1,
\]
we find:
\[
\sum_{a, b} \|\nabla(A_aB_b)\|_2^2 = 
\sum_{a} \|\nabla A_a\|_2^2 + \sum_{b} \|\nabla B_b\|_2^2 + \frac12 \langle\nabla 1,\nabla 1
\rangle,
\]
and the proof follows. \qed
\end{proof}
By induction, Lemma~\ref{lem:product} extends to products of finitely many quadratic partitions.
\smallskip%
 
\emph{Slab partitions.} For each \(j = 1, \ldots, M\), set:
\[
t_j(z):=\frac{\langle v_j,z\rangle}{r_j},\qquad Q_{j,k}(z):=\eta_k\bigl(t_j(z)\bigr),\quad k\in\mathbb Z.
\]
For each fixed \(j\), the family \(Q_{j,k}\) 
is a quadratic partition of unity on \(\R^n\). The function \(Q_{j,k}\) is supported in the slab \(\{t_j\in I_k\}\). Since \(\nabla t_j = v_j/r_j\) and \(\|v_j\|_2=1\), it follows that:
\[
\sum_k\|\nabla Q_{j,k}\|_2^2 = 
\frac{1}{r_j^2}\,\sum_k |\dot{\eta}_k(t_j)|^2.
\]
To shorten notation, set:
\[
D(t) := \sum_k |\dot{\eta}_k(t)|^2.
\]
Since \(D\) is continuous and \(3/4\)-periodic, 
\[
\sup D = C_0 < \infty.
\]
For \(\alpha = (\alpha_1, \ldots, \alpha_M)\in\mathbb Z^M\), set:
\[
P_\alpha:=\prod_{j=1}^MQ_{j,\alpha_j},\qquad
L_\alpha:=\bigl\{z\in\R^n:\ \bigl|\langle v_j,z\rangle-\tfrac34\alpha_jr_j\bigr|\le 
\tfrac12 r_j\ \ \forall j\bigr\}.
\]
By Lemma~\ref{lem:product}, \(P_\alpha\) is 
a quadratic partition of unity such that:
\begin{equation}\label{total_energy}
\sum_\alpha\|\nabla P_\alpha\|_2^2 = \sum_{j=1}^M\frac{D(t_j)}{r_j^2},\qquad
\operatorname{supp}P_\alpha\subset L_\alpha.
\end{equation}
The following is (a version of) the IMS localization formula.
\begin{lemma}
Let \(\phi\in C^\infty(\R^n)\) with 
\[
\int_{\R^n} \phi^2 = 1,\qquad 
\mathcal E(\phi)<\infty.
\]
Let \(Z\) be a random vector with density \(\phi^2\). Then there is \(\psi\in C_c^\infty(\R^n)\) such that:
\[
\int_{\R^n} \psi^2 = 1,\qquad 
\operatorname{supp}\psi\subset K,
\]
and such that:
\begin{equation}\label{eq:localize}
\mathcal E(\psi)\le\mathcal E(\phi)+C_0\sum_{j=1}^M\frac{1}{r_j^2}\,\mathbb P\Bigl(|\langle v_j,Z\rangle|\ge\frac{r_j}4\Bigr).
\end{equation}
\end{lemma}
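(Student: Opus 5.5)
The plan is the IMS localization argument: localize \(\phi\) with the product partition \(P_\alpha\), pick one piece with a favorable ratio of energy to mass, and translate it into \(K\).

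\emph{Step 1: the IMS identity.} Pointwise, exactly as in the proof of Lemma~\ref{lem:product},
\[
\sum_\alpha\|\nabla(P_\alpha\phi)\|_2^2=\sum_\alpha P_\alpha^2\|\nabla\phi\|_2^2+\phi^2\sum_\alpha\|\nabla P_\alpha\|_2^2+\tfrac12\Bigl\langle\nabla\sum_\alpha P_\alpha^2,\nabla\phi^2\Bigr\rangle .
\]
Since \(\sum_\alpha P_\alpha^2=1\), the last term vanishes. Integrating and using \eqref{total_energy} gives
\[
\sum_\alpha\mathcal E(P_\alpha\phi)=\mathcal E(\phi)+\sum_{j=1}^M\frac{1}{r_j^2}\,\mathbb E\,D\bigl(t_j(Z)\bigr),\qquad \sum_\alpha\int(P_\alpha\phi)^2=1 .
\]
The interchange of sum and integral is justified because all terms are nonnegative and the family is locally finite.

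\emph{Step 2: bounding the localization error.} On \((-\tfrac14,\tfrac14)\) only \(\eta_0\) is nonzero, so \(\eta_0\equiv1\) there and every \(\dot\eta_k\) vanishes. Hence \(D(t)=0\) for \(|t|<\tfrac14\), and \(D\le C_0\) everywhere, so
\[
\mathbb E\,D(t_j(Z))\le C_0\,\mathbb P\bigl(|\langle v_j,Z\rangle|\ge r_j/4\bigr).
\]
Call the resulting right-hand side of \eqref{eq:localize} \(B\), so that \(\sum_\alpha\mathcal E(P_\alpha\phi)\le B\sum_\alpha\int(P_\alpha\phi)^2\).

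\emph{Step 3: choosing one piece.} If \(\mathcal E(P_\alpha\phi)>B\int(P_\alpha\phi)^2\) held for every \(\alpha\) with \(P_\alpha\phi\not\equiv0\), summing over \(\alpha\) would contradict Step~2. So there is an \(\alpha\) with \(m:=\int(P_\alpha\phi)^2>0\) and \(\mathcal E(P_\alpha\phi)\le Bm\). Then \(L_\alpha\neq\emptyset\); fix any \(w\in L_\alpha\).

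\emph{Step 4: translating into \(K\).} For \(z,w\in L_\alpha\) we have \(|\langle v_j,z-w\rangle|\le \tfrac12r_j+\tfrac12r_j=r_j\) for every \(j\), so \(L_\alpha-w\subset K\). In particular \(L_\alpha\) is bounded, since \(K\) is. Define
\[
\psi(z):=m^{-1/2}(P_\alpha\phi)(z+w).
\]
Then \(\psi\) is smooth, and its support is a closed subset of the bounded set \(L_\alpha-w\), hence compact and contained in \(K\). Moreover \(\int\psi^2=1\), and since translation preserves energy, \(\mathcal E(\psi)=\mathcal E(P_\alpha\phi)/m\le B\), which is \eqref{eq:localize}.

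The main obstacle is Step~4. The cells \(L_\alpha\) with \(\alpha\neq0\) are not contained in \(K\), and since \(M>n\) in general they need not be translates of \(L_0\). The key observation is that any cell, translated by one of its own points, lands in \(K\), because \(L_\alpha-L_\alpha\subset K\). The remaining steps are routine bookkeeping.
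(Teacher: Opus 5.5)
Your proof is correct and follows the paper's argument essentially step for step. You use the same ingredients: the IMS identity for \(h_\alpha=\phi P_\alpha\), the bound \(D\le C_0\mathbf{1}_{\{|t|\ge1/4\}}\), an averaging argument to pick one piece with \(\mathcal E(f_\alpha)\le B\), and the translation by a point of \(L_\alpha\) using \(L_\alpha-L_\alpha\subset K\). Your extra care excluding pieces of zero mass and deducing compact support from the boundedness of \(K\) fills small points the paper leaves implicit.
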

\begin{proof}
Set \(h_\alpha:=\phi P_\alpha\). Each \(h_\alpha\) is smooth and supported in \(L_\alpha\). Since 
\[
\sum_\alpha P_\alpha^2=1,\qquad 
\sum_\alpha P_\alpha\nabla P_\alpha=0,
\]
it follows that:
\[
\sum_{\alpha} \int_{\R^n} h_{\alpha}^2 = 
\int_{\R^n} \phi^2 = 1,
\]
and, pointwise,
\[
\sum_\alpha\|\nabla h_\alpha\|_2^2=\|\nabla\phi\|_2^2+\phi^2\sum_\alpha\|\nabla P_\alpha\|_2^2  
\stackrel{\eqref{total_energy}}{=} 
\|\nabla\phi\|_2^2+\phi^2 \sum_{j=1}^M\frac{D(t_j)}{r_j^2}.
\]
Consequently,
\[
\sum_\alpha\mathcal E(h_\alpha)=\mathcal E(\phi) + \sum_{j=1}^M\frac{1}{r_j^2}\,\E D\bigl(t_j(Z)\bigr).
\]
Setting
\[
B := \mathcal E(\phi) + \sum_{j=1}^M\frac{1}{r_j^2}\,\E D\bigl(t_j(Z)\bigr),\qquad 
f_{\alpha} = \frac{h_{\alpha}}{\sqrt{\int_{\R^n} h_{\alpha}^2}},
\]
we get:
\[
\sum_\alpha \mathcal E(f_\alpha) \int_{\R^n} h_{\alpha}^2 = B.
\]
Since 
\[
\sum_{\alpha} \int_{\R^n} h_{\alpha}^2 = 1,
\]
there exists \(\alpha\) such that:
\[
\mathcal E(f_\alpha) \le B.
\]
To bound \(B\), first note:
\[
D(t)\le C_0\,\mathbf{1}_{\{|t|\ge 1/4\}}.
\]
Indeed, \(D\le C_0\) by the definition of \(C_0\). On \((-\tfrac14,\tfrac14)\), only \(\eta_0\) is nonzero. Hence, \(\eta_0=1\) there, and \(D\) vanishes. Since \(t_j(Z)=\langle v_j,Z\rangle/r_j\), it follows that:
\[
\E D\bigl(t_j(Z)\bigr)\le C_0\,\mathbb P\Bigl(|\langle v_j,Z\rangle|\ge\frac{r_j}4\Bigr).
\]
For \(z,z'\in L_\alpha\), we have \(|\langle v_j,z-z'\rangle|\le r_j\) for all \(j\); hence, \(L_\alpha-L_\alpha\subset K\). Fix \(z_0\in L_\alpha\), and set:
\[
\psi(z):= f_{\alpha}(z + z_0).
\]
Then:
\[
\int\psi^2=1,\quad 
\mathcal E(\psi)\le B,\quad 
\operatorname{supp}\psi\subset 
L_\alpha-z_0\subset K,
\]
and the proof follows. \qed
\end{proof}
 
\begin{proof}[Proof of Proposition~\ref{slabs}]
Let \(Z\sim N(0,I_n)\), and let 
\[
\phi(z) := (2\pi)^{-n/4}e^{-\|z\|_2^2/4}
\]
be the square root 
of its density. Then 
\(\|\nabla\phi\|_2^2=\tfrac14\|z\|_2^2\,\phi^2\); hence:
\[
\mathcal E(\phi)=\frac14\,\E\|Z\|_2^2=\frac n4.
\]
Since \(\|v_j\|_2=1\), \(\langle v_j,Z\rangle\) is a standard Gaussian; hence:
\[
\mathbb P\Bigl(|\langle v_j,Z\rangle|\ge\frac{r_j}4\Bigr)\le2\,e^{-r_j^2/32}.
\]
By \eqref{eq:localize},
\[
\mathcal E(\psi) \le \frac{n}{4} + 2C_0\sum_{j=1}^M\frac{1}{r_j^2}\,e^{-r_j^2/32},
\]
and the proof follows. \qed
\end{proof}

\section{Chaining}\label{chaining}
Let \(X=\{p_1,\ldots,p_n\}\subset\R^2\). 
A \emph{halfplane trace} is a set \(X\cap P\), where \(P\subset\R^2\) is a halfplane. Let \(\mathcal H(X)\) denote the family of halfplane traces. For \(F\subset X\), let \(\one_F\in\R^n\) be its indicator vector: 
\[
(\one_F)_i = \begin{cases}
1, & \text{if \(p_i\in F\)},\\
0, & \text{if \(p_i\notin F\)}.
\end{cases}
\]
Thus:
\[
\langle\one_F,z\rangle=\sum_{p_i\in F}z_i\quad \text{for \(z\in\R^n\)}.
\]
For \(D>0\), set:
\[
K_D(X):=\bigl\{z\in\R^n:\ |\langle\one_F,z\rangle|\le D\ \text{for all }F\in\mathcal H(X)\bigr\}.
\]
\(K_D(X)\) is convex and centrally symmetric. Let us show that \(K_D(X)\) is bounded. Choose a direction \(u\in\R^2\) such that the numbers \(\langle u, p_i\rangle\) are distinct. Order the points of \(X\) by \(\langle u,\cdot\rangle\). For each \(k\), the first \(k\) points in this order form a halfplane trace \(G_k\). Every singleton \(\{p_i\}\) is then a difference \(G_k\setminus G_{k-1}\). Therefore, \(|z_i|\le2D\) for all \(z\in K_D(X)\); that is,
\[
K_D(X)\subset[-2D,2D]^n.
\]
\smallskip%

We use the following known packing result. See \cite[\S\,5.3]{Mat}, \cite{ChazWelzl}, and \cite{Mustafa}.
\begin{theorem}\label{thm:packing}
Let \(\delta > 0\). A family \(\mathcal F\subset\mathcal H(X)\) is \emph{\(\delta\)-separated} if \(|F\triangle F'|>\delta\) for all distinct \(F,F'\in\mathcal F\). There is an absolute constant \(C_1\) such that for every \(0<\delta\le n\) and every \(\delta\)-separated \(\mathcal F\subset\mathcal H(X)\):
\[
|\mathcal F|\le C_1\,(n/\delta)^2.
\]
\end{theorem}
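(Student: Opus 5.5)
The plan is to prove this as a special case of Haussler's packing lemma. It applies to any set system with bounded VC dimension whose primal shatter function is quadratic, and halfplane traces in the plane are such a system. The proof has three ingredients.

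First, the counting bound. For any \(m\)-point set \(Y\subset\R^2\), the number of halfplane traces is \(|\mathcal H(Y)|\le C m^2\). To see this, take any halfplane \(P\) and move its boundary line, keeping the trace fixed, until it passes through two points of \(Y\) (or one point, or none). The trace is then determined by this line, by which of its \(\le2\) points are included, and by the side. That leaves \(O(m^2)\) choices. Second, the VC dimension of halfplanes in \(\R^2\) is \(d=3\).

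Second, the unit-distance graph lemma. Let \(\mathcal G\) be a family of distinct subsets of a finite set \(Y\) with VC dimension \(\le d\). Let \(G\) be the graph on \(\mathcal G\) joining two sets when their symmetric difference is a single element. Then \(G\) has at most \(d\,|\mathcal G|\) edges. I would prove this by the standard shifting argument. Shifting down along an element never decreases the number of edges and never increases the VC dimension. For a down-closed family, orient each edge toward the larger set. Each set then receives at most \(d\) edges, since a down-closed family containing a set of size \(>d\) shatters it.

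Third, Haussler's sampling argument, which I expect to be the main obstacle. If \(\delta\) is small, say \(\delta< 4dn/(n-1)\), the trivial bound \(|\mathcal F|\le|\mathcal H(X)|\le Cn^2\le C'(n/\delta)^2\) already suffices. Otherwise set \(s=\lceil 4dn/\delta\rceil\le n\). Choose a uniformly random \(s\)-subset \(S\subset X\), and let \(S'=S\setminus\{x\}\) with \(x\in S\) uniform. Let \(\mathcal F|_S\) be the projection of \(\mathcal F\) onto \(S\), and form the unit-distance graph on it.

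Each class of sets in \(\mathcal F\) with the same trace on \(S'\) contributes either one vertex of \(\mathcal F|_S\) or two vertices joined by an edge. Hence
\[
\E|\mathcal F|_S| = \E|\mathcal F|_{S'}| + \E(\#\text{edges through } x).
\]
By symmetry in the choice of \(x\), the edge lemma gives \(\E(\#\text{edges through }x)\le (d/s)\,\E|\mathcal F|_S|\).

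For the reverse inequality, condition on \(S'\) and take a class \(\mathcal F_b\) of sets with a common trace \(b\) on \(S'\). If \(|\mathcal F_b|\ge2\), the set \(\mathcal F_b\) is \(\delta\)-separated. Averaging over pairs in \(\mathcal F_b\) shows that the new point \(x\), uniform among the \(n-s+1\) remaining points, splits \(\mathcal F_b\) with probability at least roughly \(\delta/(2n)\). Summing over classes gives
\[
\E(\#\text{edges})\ge \frac{\delta}{2n}\bigl(|\mathcal F|-\E|\mathcal F|_{S'}|\bigr).
\]
Combining the two bounds with \(\E|\mathcal F|_{S'}|\le\E|\mathcal F|_S|\) yields \(|\mathcal F|\le(1+2dn/(\delta s))\,\E|\mathcal F|_S|\le \tfrac32\,\E|\mathcal F|_S|\). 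Finally, the counting bound gives \(|\mathcal F|_S|\le|\mathcal H(S)|\le Cs^2=O((n/\delta)^2)\).

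The delicate points are the exact constants in the splitting probability and the bookkeeping of \(S\) versus \(S'\). I would follow Haussler's original computation, or Mustafa's simplified version \cite{Mustafa}, for these constants. Only an absolute constant \(C_1\) is needed, so crude choices suffice.
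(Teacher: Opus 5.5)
\textbf{There is a genuine gap in the lower bound of your third step.} The paper does not prove this statement; it quotes it from \cite[\S\,5.3]{Mat}, \cite{ChazWelzl}, \cite{Mustafa}. Your route is Haussler's argument as in \cite[\S\,5.3]{Mat}: VC dimension $3$ enters only the edge lemma, and the $O(m^2)$ trace count enters only the final bound. Your counting bound, your shifting proof of the edge lemma, and the identity $|\mathcal F|_S|=|\mathcal F|_{S'}|+\#\{x\text{-edges}\}$ are all fine.

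The problem is that a class $\mathcal F_b$ contributes at most one $x$-edge, however large it is. So a per-class splitting probability of order $\delta/n$ only gives $\frac{\delta}{2n}\cdot\#\{b:\ |\mathcal F_b|\ge2\}$. That quantity is at most, not at least, $\frac{\delta}{2n}\sum_b(|\mathcal F_b|-1)=\frac{\delta}{2n}\bigl(|\mathcal F|-|\mathcal F|_{S'}|\bigr)$. Your inequality would need $\mathcal F_b$ to be split with probability $\ge\frac{\delta}{2n}(|\mathcal F_b|-1)$, which exceeds $1$ once $|\mathcal F_b|>1+2n/\delta$. Combined with your upper bound, the unweighted count only shows that few classes are nontrivial. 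It says nothing about their sizes, so it gives no bound on $|\mathcal F|$. This is a structural problem, not a matter of constants.

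The missing idea is Haussler's weighting. Give $Q\in\mathcal F|_S$ the weight $w(Q)=\#\{F\in\mathcal F:\ F\cap S=Q\}$, and give an edge $\{Q,Q'\}$ the weight $\min(w(Q),w(Q'))$. Let $W$ be the total edge weight and $W_x$ the weight of the $x$-edges.

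\emph{Lower bound.} A class of size $b$ split by $x$ as $a+(b-a)$ now contributes $\min(a,b-a)\ge a(b-a)/b$. Your pair-averaging gives exactly $\E_x[a(b-a)]\ge\binom b2\delta/n$. So the expected contribution is at least $(b-1)\delta/(2n)$, which is proportional to $b-1$ as needed.

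\emph{Upper bound.} This must be weighted too. Apply your edge lemma to each subfamily $\{Q:\ w(Q)\ge t\}$, which still has VC dimension $\le d$, and sum over $t\ge1$. An edge lies in the $t$-th subfamily iff its weight is $\ge t$, so this gives $W\le d\,|\mathcal F|$.

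\emph{Conclusion.} Then
\[
d\,|\mathcal F|\ge\E W=s\,\E W_x\ge\frac{s\delta}{2n}\bigl(|\mathcal F|-\E|\mathcal F|_{S'}|\bigr),
\]
and $s\ge4dn/\delta$ yields $|\mathcal F|\le2\,\E|\mathcal F|_{S'}|\le2Cs^2=O\bigl((n/\delta)^2\bigr)$. Note that the upper bound is now in terms of $|\mathcal F|$ itself, not $\E|\mathcal F|_S|$.
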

For \(\delta<1\), Theorem~\ref{thm:packing} is just the bound \(|\mathcal H(X)|=O(n^2)\).
\smallskip%

\emph{Nets.} We use Theorem~\ref{thm:packing} to build a hierarchy of nets in \(\mathcal H(X)\). This greedy construction is well known. For \(F,G\in\mathcal H(X)\), set:
\[
d(F,G):=|F\triangle G|.
\]
Then:
\[
\|\one_F-\one_G\|_2^2=d(F,G).
\]
Set:
\[
L:=\lceil\log_2n\rceil+1,\qquad
\delta_i:=n2^{-i}\qquad 0\le i\le L.
\]
Set \(\mathcal F_0:=\{\varnothing\}\). For \(1\le i\le L\), let \(\mathcal F_i\subset\mathcal H(X)\) be a maximal \(\delta_i\)-separated family. By Theorem~\ref{thm:packing}, \(|\mathcal F_i|\le C_1\,4^i\).
\smallskip%

By maximality, each \(\mathcal F_i\) is a \(\delta_i\)-net: every \(G\in\mathcal H(X)\) satisfies \(d(G,F)\le\delta_i\) for some \(F\in\mathcal F_i\). This also holds for \(\mathcal F_0=\{\varnothing\}\), since \(d(G,\varnothing)=|G|\le n=\delta_0\). Hence, each \(F\in\mathcal F_i\), \(i\ge1\), has a \emph{parent} \(p(F)\in\mathcal F_{i-1}\) with:
\[
d(F,p(F))\le \delta_{i-1}=n2^{1-i}.
\]
Finally, \(\delta_L\le1/2\). Since \(d\) is integer-valued, \(\mathcal F_L=\mathcal H(X)\).
\smallskip%

\emph{Increments.} For \(1\le i\le L\), set:
\[
\mathcal V_i:=\bigl\{\one_F-\one_{p(F)}:\ F\in\mathcal F_i\bigr\}\setminus\{0\}.
\]
Fix \(H\in\mathcal H(X)\). Since \(\mathcal F_L=\mathcal H(X)\), we may set \(F_L:=H\). Then set:
\[
F_{i-1}:=p(F_i)\quad \text{for \(i = L, \ldots, 1\).}
\]
The increments telescope:
\begin{equation}\label{eq:telescope}
\one_H=\sum_{i=1}^L\bigl(\one_{F_i}-\one_{F_{i-1}}\bigr).
\end{equation}
Each \(\one_{F_i}-\one_{F_{i-1}}\) is either \(0\) or in \(\mathcal V_i\). Along with \eqref{eq:telescope}, we use the following properties of the increments:
\begin{equation}\label{eq:increments}
|\mathcal V_i|\le C_1\,4^i,\qquad
\|v\|_2^2\le \delta_{i-1}\quad \forall v\in\mathcal V_i.
\end{equation}
\begin{proposition}\label{prop:halfplanes}
For each \(D > 0\), there exists \(\psi_D\in C_c^\infty(\R^n)\) such that:
\[
\int_{\R^n}\psi_D^2=1,\qquad
\operatorname{supp}\psi_D\subset K_{D}(X),
\]
and such that:
\begin{equation}\label{eq:halfplane-energy}
\mathcal E(\psi_D)\le C_2\,\frac{n^{3/2}}{D^2},
\end{equation}
where \(C_2 > 0\) is an absolute constant.
\end{proposition}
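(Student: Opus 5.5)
The plan is to apply Proposition~\ref{slabs} to a body cut out by the chaining increments and then rescale. Two facts drive it. First, by the telescoping identity \eqref{eq:telescope}, if \(|\langle v,z\rangle|\le s_i\) for every \(v\in\mathcal V_i\) and every \(i\), then \(|\langle\one_H,z\rangle|\le\sum_i s_i\) for all \(H\in\mathcal H(X)\). Second, energy scales well under dilation: if \(\psi\) is supported in \(K'\), then \(\psi_\lambda(z):=\lambda^{-n/2}\psi(z/\lambda)\) is supported in \(\lambda K'\), still has \(\int\psi_\lambda^2=1\), and has \(\mathcal E(\psi_\lambda)=\mathcal E(\psi)/\lambda^2\).

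Concretely, set
\[
K':=\bigl\{z:\ |\langle v/\|v\|_2,z\rangle|\le s_i/\|v\|_2\ \ \forall v\in\mathcal V_i,\ 1\le i\le L\bigr\}.
\]
This body is bounded, because \(\mathcal F_L=\mathcal H(X)\), so the prefix traces \(G_k\) are reachable and, as shown above for \(K_D(X)\), the traces bound every coordinate. If needed, I would instead add coordinate slabs with huge \(r\), which contribute negligibly to \eqref{eq:slab-fisher}. Put \(r_v=s_i/\|v\|_2\). Since \(\|v\|_2^2\le\delta_{i-1}\) and \(x\mapsto e^{-x/32}/x\) is decreasing, Proposition~\ref{slabs} together with \eqref{eq:increments} gives \(\psi\) supported in \(K'\) with
\[
\mathcal E(\psi)\le \frac n4 + C\sum_{i=1}^L C_1 4^i\,\frac{1}{t_i}\,e^{-t_i/32},\qquad t_i:=s_i^2/\delta_{i-1}.
\]
Now take \(\lambda:=D/\sum_i s_i\). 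Then \(\lambda K'\subset K_D(X)\), and
\[
\mathcal E(\psi_\lambda)\le \mathcal E(\psi)\,\Bigl(\sum_i s_i\Bigr)^2/D^2 .
\]
So it suffices to choose the \(s_i\) with \(\sum_i s_i=O(n^{1/4})\) and \(\mathcal E(\psi)=O(n)\).

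The main obstacle is this balancing, and the key point is that the error term may be of order \(n\) rather than exponentially small. Let \(i_0\) satisfy \(4^{i_0}\approx n\). For coarse levels \(i\le i_0\), take
\[
t_i:=4^{i-i_0}\,2^{(i_0-i)/2}\le1 .
\]
Then the error at level \(i\) is at most \(4^i/t_i\approx n\,2^{-(i_0-i)/2}\), which sums to \(O(n)\). Also \(s_i^2=\delta_{i-1}t_i\approx 2^{i_0}\,2^{-(i_0-i)/2}\), so \(s_i\approx n^{1/4}2^{-(i_0-i)/4}\), a geometric series summing to \(O(n^{1/4})\).

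For fine levels \(i>i_0\), take \(t_i:=c\,(i-i_0+1)\) with \(c\) a large absolute constant. This makes
\[
4^i e^{-t_i/32}\le n\,2^{-(i-i_0)} ,
\]
so the error again sums to \(O(n)\). Here
\[
s_i=\sqrt{n2^{1-i}\,t_i}\approx n^{1/4}\,2^{-(i-i_0)/2}\sqrt{c(i-i_0+1)},
\]
which is again summable to \(O(n^{1/4})\).

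Combining the two ranges gives \(\mathcal E(\psi)\le C n\) and \(\sum_i s_i\le C n^{1/4}\). Hence \(\mathcal E(\psi_D)\le C_2\,n^{3/2}/D^2\) with \(\psi_D:=\psi_\lambda\). The remaining work is routine: handling rounding of \(i_0\), the edge cases where \(L<i_0\) or \(n\) is small, and the exact constants in the geometric sums.
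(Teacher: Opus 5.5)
Your proposal is correct and is essentially the paper's proof. It uses the same chained body $K'$ built from the increments $\mathcal V_i$, the same application of Proposition~\ref{slabs} together with the monotonicity of $w\mapsto e^{-w/32}/w$, the same split at the level where $4^{i}\approx n$ (coarse levels accept a polynomial $O(n)$ cost with $s_i\approx n^{1/4}2^{-(i_0-i)/4}$, fine levels are paid for by the exponential factor), and the same rescaling. The only difference is cosmetic: on fine levels you take $t_i$ linear in $i-i_0$, whereas the paper keeps the symmetric choice $b_i=n^{1/4}2^{-|i-i_*|/4}$, so that $b_i^2/\delta_{i-1}$ grows exponentially; both give $\sum_i s_i=O(n^{1/4})$ and total cost $O(n)$.
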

\begin{proof}
We do not apply Proposition~\ref{slabs} to \(K_D(X)\) directly. Instead, we introduce:
\[
K':=\bigl\{z\in\R^n:\ |\langle v,z\rangle|\le b_i\ \text{for all }v\in\mathcal V_i\text{ and all }i\bigr\},
\]
where the widths \(b_i>0\) are chosen below. By \eqref{eq:telescope}, for \(z\in K'\) and \(H\in\mathcal H(X)\):
\[
|\langle\one_H,z\rangle| \le \sum_{i=1}^L|\langle\one_{F_i}-\one_{F_{i-1}},z\rangle| \le\sum_{i=1}^Lb_i.
\]
Hence:
\begin{equation}\label{eq:chain-support}
K'\subset K_{b}(X),\qquad b:=\sum_{i=1}^Lb_i.
\end{equation}
In particular, \(K'\) is bounded.
\smallskip%

\emph{Cost.} Apply Proposition~\ref{slabs} to \(K'\): the unit vectors are \(v/\|v\|_2\), \(v\in\mathcal V_i\), and the width of \(v/\|v\|_2\) is \(b_i/\|v\|_2\). Let \(\psi\) be the resulting function. By \eqref{eq:slab-fisher},
\[
\mathcal E(\psi)\le\frac n4+C\sum_{i=1}^L\sum_{v\in\mathcal V_i}c\Bigl(\frac{b_i^2}{\|v\|_2^2}\Bigr),
\qquad\text{where }\ c(w):=\frac1w\,e^{-w/32}.
\]
\(c(w)\) is decreasing. By \eqref{eq:increments}, \(\|v\|_2^2\le\delta_{i-1}\) for \(v\in\mathcal V_i\); hence, each term is at most \(c(b_i^2/\delta_{i-1})\). Also by \eqref{eq:increments}, the inner sum has at most \(C_1\,4^i\) terms. Therefore:
\begin{equation}\label{eq:level-sum}
\mathcal E(\psi)\le\frac n4+O\Bigl(\sum_{i=1}^L4^i\,c\Bigl(\frac{b_i^2}{\delta_{i-1}}\Bigr)\Bigr).
\end{equation}
We call the \(i\)-th term of this sum the (Fisher) cost of level \(i\). We want to arrange
\[
\sum_i b_i = O(n^{1/4}),
\]
while keeping the total cost \(O(n)\). Choose \(i_*\) such that:
\[
\sqrt n\le2^{i_*}<2\sqrt n.
\]
Set:
\[
b_i:=n^{1/4}\,2^{-|i-i_*|/4}.
\]
The widths peak at \(i=i_*\) and decay geometrically in \(|i-i_*|\). Hence:
\[
\sum_i b_i\le n^{1/4}\sum_{j\in\mathbb Z}2^{-|j|/4}=O(n^{1/4}).
\]
Let us bound the cost of level \(i\). Write:
\[
j:=i-i_*.
\]
Since \(2^{i_*}<2\sqrt n\),
\[
4^i=4^{j}4^{i_*}<4n\,4^{j}.
\]
Since \(2^{i_*}\ge\sqrt n\) and \(b_i^2=\sqrt n\,2^{-|j|/2}\),
\[
\delta_{i-1}=n2^{1-i}=2n\,2^{-i_*}2^{-j}\le2\sqrt n\,2^{-j},\qquad
\frac{b_i^2}{\delta_{i-1}}\ge2^{\,j-|j|/2-1}.
\]
Since \(c\) is decreasing, the cost of level \(i\) is at most \(4n\,4^{j}\,c(2^{\,j-|j|/2-1})\). Then:
\[
\sum_{i=1}^L4^i\,c\Bigl(\frac{b_i^2}{\delta_{i-1}}\Bigr)\le
4n\sum_{j\le0}4^{j}\,c\bigl(2^{3j/2-1}\bigr)+4n\sum_{j>0}4^{j}\,c\bigl(2^{j/2-1}\bigr).
\]
\emph{Few but costly: \(j\le0\).} These levels have few slabs, since \(4^j\le1\). The slabs are costly, since \(2^{3j/2-1}\le1/2\). We bound these terms as follows:
\[
\sum_{j\le0}4^{j}\,c\bigl(2^{3j/2-1}\bigr)=\sum_{j\le0}2^{2j}\,2^{1-3j/2}\,e^{-2^{3j/2}/64}\le2\sum_{j\le0}2^{j/2}=O(1).
\]
\emph{Many but cheap: \(j>0\).} These levels have many slabs, since \(4^j>1\). We bound these terms as follows:
\[
\sum_{j>0}4^{j}\,c\bigl(2^{j/2-1}\bigr)=\sum_{j>0}4^{j}\,2^{1-j/2}\,e^{-2^{j/2}/64}=2\sum_{j>0}2^{3j/2}\,e^{-2^{j/2}/64}=O(1).
\]
Therefore:
\[
\sum_{i=1}^L4^i\,c\Bigl(\frac{b_i^2}{\delta_{i-1}}\Bigr)=O(n).
\]
By \eqref{eq:level-sum},
\[
\mathcal E(\psi)\le\frac n4+O(n)=O(n).
\]
\emph{Scaling.} Let us rescale \(\psi\). Set:
\[
\psi_D(z):=(b/D)^{n/2}\,\psi(bz/D).
\]
Then:
\[
\int_{\R^n}\psi_D^2=1,\qquad 
\operatorname{supp}\psi_D \subset K_D(X).
\]
Since \(b=O(n^{1/4})\),
\[
\mathcal E(\psi_D)=\frac{b^2}{D^2}\,\mathcal E(\psi)=O\Bigl(\frac{b^2n}{D^2}\Bigr)=O\Bigl(\frac{n^{3/2}}{D^2}\Bigr),
\]
and the proof follows. \qed
\end{proof}

\section{Prefix halfplanes}\label{sec:prefix}
Recall the setting of \S\,\ref{chaining}. Let \(X=\{p_1,\ldots,p_n\}\subset\R^2\) be a set of points, and let \(\mathcal H(X)\) be the family of halfplane traces. For \(F\subset X\), let \(\one_F\in\R^n\) be its indicator vector. For \(D>0\), set again:
\[
K_D(X)=\bigl\{z\in\R^n:\ |\langle\one_F,z\rangle|\le D\ \text{for all }F\in\mathcal H(X)\bigr\}.
\]
Now let the points of \(X\) be ordered. For \(0\le k\le n\), set:
\[
X_k:=\{p_1,\ldots,p_k\}.
\]
For \(\chi\in\R^n\) and \(F\subset X\), write:
\[
\chi(F):=\langle\one_F,\chi\rangle=\sum_{p_i\in F}\chi_i.
\]
\begin{theorem}\label{thm:prefix}
Every ordered \(X=\{p_1,\ldots,p_n\}\) has a coloring \(\chi\in\{-1,1\}^n\) with:
\[
|\chi(F\cap X_k)|=O(n^{1/4})\qquad\forall F\in\mathcal H(X),\ \ 0\le k\le n.
\]
\end{theorem}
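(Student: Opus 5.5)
The plan is to reduce Theorem~\ref{thm:prefix} to an energy bound of the same form as Proposition~\ref{prop:halfplanes}, now for the larger body
\[
K^{\rm pre}_D(X):=\bigl\{z\in\R^n:\ |\langle\one_{F\cap X_k},z\rangle|\le D\ \ \forall F\in\mathcal H(X),\ 0\le k\le n\bigr\}.
\]
Given such a bound, the coloring comes from the partial coloring step of Theorem~\ref{thm:SV}, iterated. I am assuming that Theorem~\ref{thm:SV} has the following form: a density $\psi^2$ supported in a symmetric body $K$ yields a point of $K$ with all but $O(\mathcal E(\psi))$ coordinates in $\{-1,1\}$. If its exact form differs, the iteration below has to be adjusted.

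\emph{Step 1: dyadic decomposition of prefixes.} Assume $n$ is a power of two, padding if necessary. Split the ordered index set into dyadic blocks: at level $l$ there are $2^l$ consecutive blocks $B$ of size $m_l=n2^{-l}$. Every prefix $X_k$ is a disjoint union of at most one block per level. So for every $F$ and $k$,
\[
\one_{F\cap X_k}=\sum_l \one_{F\cap B_l(k)},
\]
where each $F\cap B_l(k)$ is a halfplane trace of the point set $B_l(k)$. Hence it suffices to control all halfplane traces of every dyadic block, with a level-dependent tolerance $D_l$ satisfying $\sum_l D_l=O(D)$.

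\emph{Step 2: one slab body for all blocks, and its cost.} For each block $B$ at level $l$, run the chaining of \S\,\ref{chaining} on the point set $B$, which has $m_l$ points. Take the widths of Proposition~\ref{prop:halfplanes} multiplied by a factor $\lambda_l$:
\[
b_{l,i}=\lambda_l\, m_l^{1/4}\,2^{-|i-i_*(m_l)|/4}.
\]
Then intersect all resulting slabs, over all blocks and all levels, into a single body $K''$, and apply Proposition~\ref{slabs} once to $K''$. A single application matters: the blocks at different levels share coordinates, so one cannot simply take products of densities.

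By the telescoping argument, $K''\subset K^{\rm pre}_{b}(X)$ with
\[
b=O\Bigl(\sum_l\lambda_l m_l^{1/4}\Bigr).
\]
Redo the cost computation of Proposition~\ref{prop:halfplanes} with the extra factor $\lambda$. The cheap levels ($j>0$) still contribute $O(m)$, and in fact less. The costly levels ($j\le0$) contribute $O(m/\lambda^2)$ per block, because $c(w)\le 1/w$. So level $l$ costs $O(2^l m_l(1+\lambda_l^{-2}))$. Since $\lambda_l\ge1$, the $1$-part of the cheap terms must be checked to decay; with $\lambda\ge1$ the exponential factor gives $O(m e^{-c\lambda^2})$, which is enough.

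Choose $\lambda_l=2^{l/8}$. Then $b=O(n^{1/4}\sum_l 2^{-l/8})=O(n^{1/4})$, and the total energy is $n/4+O(\sum_l n2^{-l/4})=O(n)$. Rescaling as in Proposition~\ref{prop:halfplanes} gives $\psi_D$ supported in $K^{\rm pre}_D(X)$ with $\mathcal E(\psi_D)=O(n^{3/2}/D^2)$.

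\emph{Step 3: iteration.} Take $D=C'n^{1/4}$ with $C'$ large, so that $\mathcal E(\psi_D)\le\varepsilon n$. Theorem~\ref{thm:SV} then gives $x\in K^{\rm pre}_D(X)$ with all but $O(\varepsilon n)\le n/2$ coordinates equal to $\pm1$. Restrict to the uncolored points $X'$, keeping their induced order. For every $F$ and $k$, the set $F\cap X_k\cap X'$ is of the form $F'\cap X'_{k'}$ with $F'\in\mathcal H(X')$. So the same argument applies to $X'$, with tolerance $C'|X'|^{1/4}$. The discrepancies add up over rounds as a geometric series in $n_t^{1/4}$ with $n_t\le n2^{-t}$, giving $O(n^{1/4})$ in total.

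The main obstacle I expect is in Step 3. The partial coloring $x$ from Theorem~\ref{thm:SV} may be fractional on the uncolored coordinates, and a later round's vector must be added on those coordinates only. The argument needs the bodies to be compatible with this: subsequent rounds live on coordinate subspaces, and $K^{\rm pre}$ of the subsystem is exactly the right constraint there. One must also check that the remaining fractional values can be absorbed, for instance by a final round on $O(1)$ points. If Theorem~\ref{thm:SV} only controls the number of discarded coordinates, I would instead run the iteration in the linear-algebraic form, freezing the coordinates that reach $\pm1$.
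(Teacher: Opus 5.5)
Your proof is correct, but it takes a genuinely different route from the paper.

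\textbf{The paper's route.} The paper never builds a prefix body. Theorem~\ref{thm:SV} is an \emph{online} statement, so the paper applies it to the plain halfplane body $K_D(X)$, with $\psi_D$ from Proposition~\ref{prop:halfplanes} and $v_i=e_i$ in the given order. The guarantee that \emph{every} prefix of accepted terms lies in $2K_D(X)$ already gives $|\chi(F\cap X_k)|\le 2D$ for all $F$ and $k$. Prefix control is therefore free, Lemma~\ref{lem:partial} takes a few lines, and the iteration is exactly your Step~3.

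\textbf{Your route.} You put the prefix constraints into the body instead: dyadic blocks, chaining inside each block with weights $\lambda_l=2^{l/8}$, and a single application of Proposition~\ref{slabs}, so the Gaussian term $n/4$ is paid only once. The bookkeeping works. Since $c(\lambda^2w)\le\lambda^{-2}c(w)$ for $\lambda\ge1$, each block of size $m_l$ costs $O(m_l\lambda_l^{-2})$ in both the $j\le0$ and $j>0$ ranges. Your intermediate bound $O(2^lm_l(1+\lambda_l^{-2}))$ would sum to $O(n\log n)$, but your follow-up remark repairs this. Level $l$ then costs $O(n2^{-l/4})$ and contributes $O(n^{1/4}2^{-l/8})$ to $b$.

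\textbf{Comparison.} Your route gives a stronger, algorithm-independent fact: the prefix body $K^{\mathrm{pre}}_D(X)$ itself carries a density with $\mathcal E(\psi_D)=O(n^{3/2}/D^2)$. You therefore use only the final-sum guarantee of Theorem~\ref{thm:SV}, and any offline Fisher-information partial-coloring result would suffice. The paper's route is shorter because it exploits the online structure.

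\textbf{The form of Theorem~\ref{thm:SV}.} Your guess about its form is off, but harmlessly. The output is an integral partial coloring $\chi\in\{-1,0,1\}^n$ lying in $2K$, with the zeros being the discarded terms. The bound is $\E\,\#\{\chi_i=0\}\le\sum_i\|\partial_i\psi\|_{L^2}\le\sqrt{n\,\mathcal E(\psi)}$, not $O(\mathcal E(\psi))$. This still gives at most $n/2$ zeros for $D\asymp n^{1/4}$, so your parameter choice stands. Since no coordinates are ever fractional, the obstacle you anticipated in Step~3 does not arise, and the linear-algebraic fallback is unnecessary.
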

The proof relies on the following result of \cite{Versh-Sm-3}.
\begin{theorem}\label{thm:SV}
Let \(K\subset\R^n\) be convex and centrally symmetric. Let \(\psi\in C_c^\infty(\R^n)\) be such that:
\[
\int_{\R^n}\psi^2=1,\qquad 
\operatorname{supp}\psi\subset K.
\]
Let \(v_1,\ldots,v_m\in\R^n\) be any vectors, and let \(\e_1,\ldots,\e_m\) be iid fair signs. Then there is an online algorithm that discards some terms \(\e_iv_i\) so that for each \(k\),
\[
\sum_{i\le k\ \mathrm{accepted}}\e_iv_i\in2K
\quad\text{deterministically},
\]
and
\[
\E\,\#\{\text{\rm discarded}\}\le\sum_{i=1}^m\Bigl(\int_{\R^n}\langle\nabla\psi,v_i\rangle^2\Bigr)^{1/2}.
\]
\end{theorem}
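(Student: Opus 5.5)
The plan is to realize the online algorithm as a Metropolis chain whose stationary law is the density \(\rho:=\psi^2\). Since \(\operatorname{supp}\rho\subset K\), the state of the chain will always lie in \(K\). The accepted partial sums are then differences of two points of \(K\), hence lie in \(K-K=2K\) by convexity and central symmetry. The expected number of discards is the expected number of rejected Metropolis proposals, and we bound it by a total-variation estimate controlled by \(\nabla\psi\).

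\emph{The algorithm.} Using internal randomness, draw \(Z_0\) with density \(\rho\), so \(\rho(Z_0)>0\) almost surely. At step \(i\), with current state \(Z_{i-1}\), the sign \(\e_i\) arrives. Propose \(Z'=Z_{i-1}+\e_iv_i\), and accept it with probability
\[
\min\bigl(1,\rho(Z')/\rho(Z_{i-1})\bigr).
\]
On acceptance set \(Z_i=Z'\) and keep the term \(\e_iv_i\); otherwise set \(Z_i=Z_{i-1}\) and discard it. The decision uses only \(\e_i\) and past information, so the algorithm is online.

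\emph{Deterministic guarantee.} A move to a point where \(\rho=0\) has acceptance probability \(0\). Hence \(\rho(Z_k)>0\), and so \(Z_k\in K\), for all \(k\). Since \(\sum_{i\le k,\ \mathrm{accepted}}\e_iv_i=Z_k-Z_0\in K-K=2K\), the first claim holds.

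\emph{Stationarity.} Because \(\e_i\) is a fair sign independent of the past, the proposal kernel \(z\mapsto z\pm v_i\) (each with probability \(1/2\)) is symmetric. Metropolis acceptance therefore satisfies detailed balance with respect to \(\rho\), so by induction each \(Z_{i-1}\) has density \(\rho\). Consequently the discard probability at step \(i\) equals
\[
\frac12\sum_{\pm}\int\bigl(\rho(z)-\rho(z\pm v_i)\bigr)_+\,dz=\frac12\int\bigl|\rho(z+v_i)-\rho(z)\bigr|\,dz.
\]
To get this, substitute \(z\mapsto z-v_i\) in the \(-\) term, and use that \(\rho\) and its translate have equal mass, so positive and negative parts have equal integrals.

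\emph{Energy bound.} Write \(\psi_v(z):=\psi(z+v)\). Factor \(\rho_v-\rho=(\psi_v-\psi)(\psi_v+\psi)\) and apply Cauchy--Schwarz, using \(\|\psi_v+\psi\|_2\le2\):
\[
\int|\rho_v-\rho|\le2\|\psi_v-\psi\|_2.
\]
Next write \(\psi_v(z)-\psi(z)=\int_0^1\langle\nabla\psi(z+tv),v\rangle\,dt\). Minkowski's integral inequality and translation invariance of Lebesgue measure then give
\[
\|\psi_v-\psi\|_2\le\Bigl(\int\langle\nabla\psi,v\rangle^2\Bigr)^{1/2}.
\]
Hence the discard probability at step \(i\) is at most \(\bigl(\int\langle\nabla\psi,v_i\rangle^2\bigr)^{1/2}\), and summing over \(i\) gives the claimed expectation.

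The step needing the most care is stationarity. The sign \(\e_i\) must be independent of \(Z_{i-1}\), which holds because the signs are iid and independent of the internal randomness. Given that, the rest is routine, since \(\psi\in C_c^\infty\) justifies every integral manipulation.
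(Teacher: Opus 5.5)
The paper contains no proof of this statement. It imports the result verbatim from \cite{Versh-Sm-3}, so there is no internal argument to match, and your proposal has to stand on its own. It does: your Metropolis argument is a correct, self-contained proof.

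\begin{itemize}
\item Because a rejection leaves the state in place, the accepted sum telescopes to \(Z_k-Z_0\in K-K=2K\). A maximal coupling that resamples on failure would lose this, so this choice is the right one.
\item The proposal \(z\mapsto z\pm v_i\) is symmetric, and \(\e_i\) is independent of the past. Detailed balance therefore keeps \(Z_{i-1}\sim\psi^2\).
\item The rejection probability at step \(i\) is then exactly \(\frac12\|\rho(\cdot+v_i)-\rho\|_1\).
\item The factorization \(\psi_v^2-\psi^2=(\psi_v-\psi)(\psi_v+\psi)\), Cauchy--Schwarz, and Minkowski turn this into \(\|\langle\nabla\psi,v_i\rangle\|_2\), with exactly the constant in the statement.
\end{itemize}

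Citing keeps the present paper short and ties it to the framework of \cite{Versh-Sm-3}. Your route costs about half a page, but it shows where each hypothesis is used: fair independent signs give stationarity, \(\operatorname{supp}\psi\subset K\) plus central symmetry gives the deterministic containment, and the \(L^2\) gradient gives the discard count.

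Two small points should be tidied.

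\begin{itemize}
\item As written, the containment holds only almost surely, since you need \(\rho(Z_0)>0\), yet the statement says ``deterministically''. Fix this by drawing \(Z_0\) from the law \(\psi^2\) restricted to the open set \(\{\psi\neq0\}\), which is the same law.
\item Your algorithm uses internal coins, namely \(Z_0\) and the acceptance uniforms. The expectation is therefore over signs and coins jointly. This is harmless for Lemma~\ref{lem:partial}, which only needs one good outcome, but it should be said.
\end{itemize}
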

First, we do a partial coloring.
\begin{lemma}\label{lem:partial}
Every ordered \(X=\{p_1,\ldots,p_n\}\) has a partial coloring \(\chi\in\{-1,0,1\}^n\) with at most \(n/2\) zeros such that:
\[
|\chi(F\cap X_k)|\le C_3\,n^{1/4}\qquad\forall F\in\mathcal H(X),\ \ 0\le k\le n,
\]
where \(C_3>0\) is an absolute constant.
\end{lemma}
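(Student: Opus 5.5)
Run Theorem~\ref{thm:SV} with $K:=K_D(X)$ and $\psi:=\psi_D$ from Proposition~\ref{slabs}'s consequence, Proposition~\ref{prop:halfplanes}, applied to the standard basis vectors $v_i:=e_i$ in the given order $i=1,\ldots,n$. Take $D:=A\,n^{1/4}$ with $A$ a large absolute constant. Let $\e_1,\ldots,\e_n$ be iid fair signs. Set $\chi_i:=\e_i$ if the term $\e_ie_i$ is accepted, and $\chi_i:=0$ if it is discarded.

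The online guarantee gives, for every $k$,
\[
\sum_{i\le k\ \mathrm{accepted}}\e_ie_i=\chi\cdot\one_{X_k}\in 2K_D(X).
\]
Here $\chi\cdot\one_{X_k}$ is the vector that agrees with $\chi$ on the first $k$ coordinates and vanishes elsewhere. Membership in $2K_D(X)$ means that for every $F\in\mathcal H(X)$,
\[
\bigl|\langle\one_F,\chi\cdot\one_{X_k}\rangle\bigr|=|\chi(F\cap X_k)|\le 2D=2A\,n^{1/4}.
\]
So every prefix bound holds deterministically, and only the number of zeros remains to be controlled. This is why prefixes come for free: the algorithm is online, and the body $K_D(X)$ already constrains all halfplane traces.

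For the zeros, Theorem~\ref{thm:SV} gives
\[
\E\,\#\{\text{discarded}\}\le\sum_{i=1}^n\Bigl(\int\langle\nabla\psi_D,e_i\rangle^2\Bigr)^{1/2}.
\]
By Cauchy--Schwarz this is at most $\sqrt{n\,\mathcal E(\psi_D)}$. Using \eqref{eq:halfplane-energy},
\[
\sqrt{n\,\mathcal E(\psi_D)}\le\sqrt{n\cdot C_2\,n^{3/2}/D^2}=\sqrt{C_2}\,n^{5/4}/D=\frac{\sqrt{C_2}}{A}\,n.
\]
Choose $A:=4\sqrt{C_2}$. Then the expected number of discarded terms is at most $n/4$, so some realization of the signs has at most $n/4\le n/2$ zeros. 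That realization gives the lemma with $C_3:=2A$.

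The only potential subtlety is whether Theorem~\ref{thm:SV} applies. It needs $K$ to be convex and centrally symmetric, and $K_D(X)$ is both, since it is an intersection of symmetric slabs. It also needs $\operatorname{supp}\psi_D\subset K_D(X)$, which Proposition~\ref{prop:halfplanes} provides. The main work, bounding the energy at $O(n^{3/2}/D^2)$, was already done there, so I expect this lemma to be a direct combination with no real obstacle. The rest is the arithmetic choice of $A$.
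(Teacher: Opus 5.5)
Your proposal is correct and follows the paper's proof essentially verbatim. You apply Theorem~\ref{thm:SV} to $K_D(X)$ with $\psi_D$ from Proposition~\ref{prop:halfplanes} and $v_i=e_i$, read off the prefix bounds from membership in $2K_D(X)$, and bound the expected number of zeros by $\sqrt{n\,\mathcal E(\psi_D)}\le\sqrt{C_2}\,n^{5/4}/D$ via Cauchy--Schwarz. The only difference is the constant: the paper takes $D=2\sqrt{C_2}\,n^{1/4}$ to get at most $n/2$ zeros, while your choice $A=4\sqrt{C_2}$ gives at most $n/4$.
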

\begin{proof}
Let \(D > 0\). Apply Theorem~\ref{thm:SV} to \(K=K_D(X)\). Let \(\psi=\psi_D\) be given by Proposition~\ref{prop:halfplanes}, and let \(v_i:=e_i\), \(i=1,\ldots,n\). Set:
\[
\chi_i:=\begin{cases}
\e_i, & \text{if \(\e_ie_i\) is accepted},\\
0, & \text{if \(\e_ie_i\) is discarded}.
\end{cases}
\]
Then:
\[
\forall k\qquad \sum_{i\le k}\chi_ie_i\in2K_D(X).
\]
That is,
\[
|\chi(F\cap X_k)|\le2D\qquad\forall F\in\mathcal H(X),\ \ 0\le k\le n.
\]
Let us count the zeros. By Theorem~\ref{thm:SV} and Cauchy-Schwarz,
\[
\E\,\#\{\chi_i=0\}
\le\sum_{i=1}^n\Bigl(\int_{\R^n}(\partial_i\psi_D)^2\Bigr)^{1/2}
\le\sqrt n\,\mathcal E(\psi_D)^{1/2}.
\]
By \eqref{eq:halfplane-energy},
\[
\E\,\#\{\chi_i=0\}\le\sqrt{C_2}\,\frac{n^{5/4}}{D}.
\]
Set \(D:=2\sqrt{C_2}\,n^{1/4}\). Then:
\[
\E\,\#\{\chi_i=0\}\le \frac{n}{2}.
\]
Hence, some outcome of the signs leaves at most \(n/2\) zeros. This completes the proof. \qed
\end{proof}

\begin{proof}[Proof of Theorem~\ref{thm:prefix}]
The iteration of partial colorings goes back to Spencer \cite{Spencer-6}. We iterate Lemma~\ref{lem:partial} as in \cite[\S\,2]{Versh-Sm-4}. First, apply it to \(X\). Then apply it to the points left uncolored, in the induced order. Continue until all points are colored. Each round colors at least half of the remaining points. Hence, round \(r\ge0\) is applied to at most \(n2^{-r}\) points. Let \(\chi\in\{-1,1\}^n\) be the final coloring.
\smallskip%

Fix \(F\in\mathcal H(X)\) and 
\(0\le k\le n\). Let \(Y_r \subset X\) be the set of points that receive their \((\pm 1)\)-color at round \(r\). The sets \(Y_r\) partition \(X\). Round \(r\) is applied to \(Y_r\cup Y_{r+1}\cup\cdots\), with the order induced from \(X\). By Lemma~\ref{lem:partial},
\[
|\chi(F\cap Y_r\cap X_k)|\le C_3\,(n2^{-r})^{1/4}.
\]
Consequently,
\[
|\chi(F \cap X_k)| \le 
\sum_{r\ge0} |\chi(F \cap Y_r \cap X_k)| \le 
C_3\,n^{1/4}\sum_{r\ge0}2^{-r/4} = O(n^{1/4}),
\]
and the proof follows. \qed
\end{proof}

\section{Triangles}\label{sec:triangles}
Let \(X=\{p_1,\ldots,p_n\}\subset\R^2\) be a set of points. For \(\chi\in\R^n\) and \(F\subset X\), write again:
\[
\chi(F)=\sum_{p_i\in F}\chi_i.
\]
\begin{theorem}
Every \(X=\{p_1,\ldots,p_n\}\) has a coloring \(\chi\in\{-1,1\}^n\) with:
\[
|\chi(X\cap T)|=O(n^{1/4})\qquad\text{for all triangles }T.
\]
\end{theorem}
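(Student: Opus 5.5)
We deduce the statement from Theorem~\ref{thm:prefix} applied with a suitable ordering of \(X\). Choose a direction \(u\in\R^2\) such that the numbers \(\langle u,p_i\rangle\) are distinct, and relabel the points so that \(\langle u,p_1\rangle<\cdots<\langle u,p_n\rangle\). Let \(\chi\in\{-1,1\}^n\) be the coloring given by Theorem~\ref{thm:prefix} for this ordered set. With this ordering, every prefix \(X_k\) is the trace of the halfplane \(\{\langle u,\cdot\rangle\le c\}\) for a suitable \(c\). Every ``interval'' \(X_k\setminus X_{k'}\) is a difference of two prefixes. Consequently, for every halfplane trace \(F\), the bound of Theorem~\ref{thm:prefix} gives
\[
|\chi(F\cap S)|\le 2C\,n^{1/4}
\]
for every strip \(S=\{c'<\langle u,\cdot\rangle\le c\}\).

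The key geometric step is to write \(X\cap T\), for a triangle \(T\), as a signed combination of a bounded number of sets of the form \(F\cap S\) (or \(F\cap F'\cap S\), which we must avoid). Let the vertices of \(T\) be \(a,b,c\), ordered so that \(\langle u,a\rangle\le\langle u,b\rangle\le\langle u,c\rangle\); by genericity of \(u\), we may assume these are strict up to perturbing \(T\), since only finitely many points matter. The line \(\{\langle u,\cdot\rangle=\langle u,b\rangle\}\) cuts \(T\) into two pieces. The first lies in the strip \(S_1=\{\langle u,a\rangle\le\langle u,\cdot\rangle\le\langle u,b\rangle\}\), the second in the strip \(S_2\) between \(b\) and \(c\). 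Inside its strip, each piece is bounded by exactly two lines (two sides of \(T\)), one lower and one upper with respect to a direction transverse to \(u\). So inside the strip, \(T\cap S_1=(P_1\setminus P_2)\cap S_1\), where \(P_1,P_2\) are halfplanes with \(P_2\cap S_1\subset P_1\cap S_1\): points above the lower side minus points above the upper side. Hence
\[
\chi(X\cap T\cap S_1)=\chi(X\cap P_1\cap S_1)-\chi(X\cap P_2\cap S_1),
\]
and each term is \(O(n^{1/4})\) by the strip bound above. The same holds for \(S_2\). Summing the at most four terms gives \(|\chi(X\cap T)|=O(n^{1/4})\).

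The main obstacle is the bookkeeping at the boundaries: points on the dividing line through \(b\), points on the sides of \(T\), and whether \(T\) is closed or open. This will be handled by choosing the strips half-open and, using finiteness of \(X\), slightly enlarging or shrinking the halfplanes so that the traces are exactly right. Degenerate triangles (segments, points) are covered by the same decomposition, or by a limiting argument, since only finitely many traces occur. The core inequality is simply that strips are differences of prefixes in the \(u\)-order, which is exactly what Theorem~\ref{thm:prefix} controls.
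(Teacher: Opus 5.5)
Your proof is correct and is essentially the paper's argument. The paper orders \(X\) by the first coordinate, where you use a generic direction \(u\); this changes nothing. It then writes a halfplane trace restricted to a vertical strip as a difference of two prefix-restricted traces, expresses a triangle with a side on a level line as a difference of two such sets, splits an arbitrary triangle along the level line through its middle vertex, and handles boundary points by a small perturbation using finiteness of \(X\), just as you propose.
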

\begin{proof}
The proof reduces triangles to corners.  For a nonvertical line \(\ell\colon y=\ell(x)\) and \(c\in\R\), set:
\[
\Gamma(\ell,c):=\bigl\{(x,y)\in\R^2:\ x\le c,\ \ y\le\ell(x)\bigr\}.
\]
Order \(X\) by the first coordinate. Let \(\chi\) be the coloring from Theorem~\ref{thm:prefix} for this order.
\colorlet{rgn}{blue!22}
\colorlet{rgnedge}{blue!55!black}
\tikzset{
  pic/.style={scale=0.55,baseline={([yshift=-.5ex]current bounding box.center)}},
  fade/.style={top color=rgn,bottom color=rgn!0,draw=none},
  solid region/.style={fill=rgn,draw=rgnedge,line width=0.6pt,line join=round},
  edge/.style={draw=rgnedge,line width=0.6pt,line join=round},
  ghost/.style={black!40,thin},
  guide/.style={black!45,densely dashed,thin},
  lab/.style={font=\scriptsize,inner sep=1.5pt}}
\newcommand{\picbox}{\useasboundingbox (-0.1,-0.55) rectangle (4.45,3);}
\newcommand{\guide}[2]{\draw[guide] (#1,0)--(#1,3);\node[lab,anchor=north] at (#1,0) {\(#2\)};}
\newcommand{\ghostline}{\draw[ghost] (0,0.9)--(4,2.5);\node[lab,anchor=west] at (4,2.5) {\(\ell\)};}
\newcommand{\op}[1]{\;#1\;}
\medskip%

\emph{Corners.} \(X\cap\Gamma(\ell,c)\) is a halfplane trace intersected with a prefix:
\[
\begin{tikzpicture}[pic]\picbox
\shade[fade] (0,0)--(2.6,0)--(2.6,1.94)--(0,0.9)--cycle;
\ghostline\guide{2.6}{c}
\draw[edge] (0,0.9)--(2.6,1.94)--(2.6,0);
\end{tikzpicture}
\op{=}
\begin{tikzpicture}[pic]\picbox
\shade[fade] (0,0)--(4,0)--(4,2.5)--(0,0.9)--cycle;
\ghostline
\draw[edge] (0,0.9)--(4,2.5);
\end{tikzpicture}
\op{\cap}
\begin{tikzpicture}[pic]\picbox
\shade[fade] (0,0)--(2.6,0)--(2.6,3)--(0,3)--cycle;
\guide{2.6}{c}
\draw[edge] (2.6,3)--(2.6,0);
\end{tikzpicture}
\]
By Theorem~\ref{thm:prefix},
\begin{equation}\label{eq:corner}
|\chi(X\cap \Gamma(\ell,c))|=O(n^{1/4}).
\end{equation}
\smallskip%

\emph{Interfixes.} Consider the domain below 
\(\ell\) and between the vertical lines \(x=a\) and \(x=b\); it is a difference of two corners:
\[
\begin{tikzpicture}[pic]\picbox
\shade[fade] (1,0)--(3,0)--(3,2.1)--(1,1.3)--cycle;
\ghostline\guide{1}{a}\guide{3}{b}
\draw[edge] (1,0)--(1,1.3)--(3,2.1)--(3,0);
\end{tikzpicture}
\op{=}
\begin{tikzpicture}[pic]\picbox
\shade[fade] (0,0)--(3,0)--(3,2.1)--(0,0.9)--cycle;
\ghostline\guide{3}{b}
\draw[edge] (0,0.9)--(3,2.1)--(3,0);
\end{tikzpicture}
\op{-}
\begin{tikzpicture}[pic]\picbox
\shade[fade] (0,0)--(1,0)--(1,1.3)--(0,0.9)--cycle;
\ghostline\guide{1}{a}
\draw[edge] (0,0.9)--(1,1.3)--(1,0);
\end{tikzpicture}
\]
Its trace is a halfplane trace intersected with an interfix \(X_k-X_j\). By \eqref{eq:corner}, its \(\chi\) is \(O(n^{1/4})\).
\smallskip%

\emph{Triangles with a vertical side.} Such a triangle is a difference of two domains of the previous kind:
\[
\begin{tikzpicture}[pic]\picbox
\draw[ghost] (0.3,2.985)--(4,0.95); \draw[ghost] (0.3,0.42)--(4,1.9);
\node[lab,anchor=east] at (0.3,2.985) {\(\ell_2\)}; \node[lab,anchor=east] at (0.3,0.42) {\(\ell_1\)};
\guide{1}{a}\guide{3}{b}
\draw[solid region] (1,0.7)--(1,2.6)--(3,1.5)--cycle;
\end{tikzpicture}
\op{=}
\begin{tikzpicture}[pic]\picbox
\shade[fade] (1,0)--(3,0)--(3,1.5)--(1,2.6)--cycle;
\draw[ghost] (0.3,2.985)--(4,0.95);
\guide{1}{a}\guide{3}{b}
\draw[edge] (1,0)--(1,2.6)--(3,1.5)--(3,0);
\node[lab,anchor=east] at (0.3,2.985) {\(\ell_2\)};
\end{tikzpicture}
\op{-}
\begin{tikzpicture}[pic]\picbox
\shade[fade] (1,0)--(3,0)--(3,1.5)--(1,0.7)--cycle;
\draw[ghost] (0.3,0.42)--(4,1.9);
\guide{1}{a}\guide{3}{b}
\draw[edge] (1,0)--(1,0.7)--(3,1.5)--(3,0);
\node[lab,anchor=east] at (0.3,0.42) {\(\ell_1\)};
\end{tikzpicture}
\]
For such a triangle, its \(\chi\) is again \(O(n^{1/4})\). 
\smallskip%

\emph{Triangles.} Every triangle \(T\) is a union of 
two triangles with vertical sides. Perturbing \(T\) slightly, we may assume that no point of \(X\) lies on the boundaries in the pictures. Hence, 
\(\chi(X\cap T)\) is the sum of two terms, each \(O(n^{1/4})\). \qed
\end{proof}

\bibliographystyle{plain}
\bibliography{ref}

\end{document}